\documentclass[twoside,11pt]{article}

\usepackage{blindtext}

\usepackage{jmlr2e}

\usepackage{amsmath}

\usepackage{subcaption}
\usepackage{nicefrac}

\usepackage{tikz}
\usetikzlibrary{external}
\usepackage{tikz-cd}
\usepackage{pgfplots}
\usepackage{pgfplotstable}
\usetikzlibrary{3d,calc}
\usetikzlibrary{shapes, spy, scopes, matrix, datavisualization.formats.functions, arrows, datavisualization, plothandlers, plotmarks, svg.path, positioning}
\usetikzlibrary{external}
\tikzset{external/only named=true}
\usetikzlibrary{spy}
\usetikzlibrary{svg.path}
\usetikzlibrary{positioning}
\pgfplotsset{compat=newest}
\pgfplotsset{plot coordinates/math parser=false}
\pgfplotsset{scaled y ticks = false, tick label style={/pgf/number format/fixed}}
\pgfplotsset{scaled x ticks = false, tick label style={/pgf/number format/fixed}}

\usepackage[capitalize,nameinlink]{cleveref}

\DeclareMathOperator*{\argmin}{\arg\!\min}
\newtheorem{observation}{Observation}
\newtheorem{assumption}[theorem]{Assumption}

\usepackage{xcolor}
\definecolor{steelblue}{rgb}{0.27,0.51,0.71}

\def\N{\mathbb N}

\def\R{\mathbb R}

\def\R{\mathbb{R}}
\def\N{\mathbb{N}}

\def\cA{\mathcal{A}}

\def\cD{\mathcal{D}}

\def\cG{\mathcal{G}}
\def\cH{\mathcal{H}}
\def\cI{\mathcal{I}}

\def\cL{\mathcal{L}}
\def\cM{\mathcal{M}}

\def\cQ{\mathcal{Q}}

\def\cU{\mathcal{U}}
\def\cV{\mathcal{V}}

\def\ff{\boldsymbol{f}}

\def\uu{\boldsymbol{u}}
\def\vv{\boldsymbol{v}}
\def\ww{\boldsymbol{w}}
\def\xx{\boldsymbol{x}}
\def\yy{\boldsymbol{y}}
\def\zz{\boldsymbol{z}}

\def\II{\boldsymbol{I}}

\def\KK{\boldsymbol{K}}

\def\11{\mathbf{1}}

\def\00{\boldsymbol{0}}

\def\aalpha{\boldsymbol{\alpha}}
\def\bbeta{\boldsymbol{\beta}}

\def\zzeta{\boldsymbol{\zeta}}

\def\xxi{\boldsymbol{\xi}}

\def\oomega{\boldsymbol{\omega}}

\def\frakf{\mathfrak{f}}

\def\transpose{\text{T}}

\def\spn{\operatorname{span}}

\usepackage{lastpage}

\ShortHeadings{Kernel-based Operator Learning}{R.~Kempf}
\firstpageno{1}

\begin{document}

\title{Kernel-based Operator Learning: Error Analysis, Budget Allocation, and a Physics-Informed Extension}

\author{\name R\"udiger Kempf \email ruediger.kempf@uni-bayreuth.de \\
       \addr Applied and Numerical Analysis\\
       University of Bayreuth\\
       95440 Bayreuth, Germany}

\editor{}

\maketitle

\begin{abstract}
We study kernel-based operator learning in a two-stage sampling framework, where an offline kernel regression operator learns a discretized representation of the target operator from input-output pairs and an online kernel reconstruction operator recovers the output function from predicted observations.

Our main theoretical contribution is an explicit budget allocation condition relating the number $N$ of training pairs, the number $n$ of input observations, and the output resolution $m$. The condition is derived from a coupled error analysis that interprets the surrogate as a reconstruction from approximate data. This yields a decomposition of the total error into reconstruction and learning contributions that can be analyzed independently. As a consequence, we obtain quantitative scaling laws describing how $N$, $n$, and $m$ must be coupled to guarantee convergence and to balance offline learning and online reconstruction errors. The resulting estimates extend previous analyses of kernel-based operator learning.

We further introduce a physics-informed extension that incorporates knowledge of the underlying PDE at evaluation time. Rather than encoding constraints directly into the kernel, we augment the online reconstruction step by penalizing PDE residuals at collocation points. The method requires no retraining for new inputs. Numerical experiments illustrate the theoretical findings and demonstrate the effectiveness of the proposed physics-informed reconstruction strategy.
\end{abstract}

\begin{keywords}
  kernel methods, operator learning, budget allocation, physics-informed learning
\end{keywords}

\section{Introduction}\label{sec:Introduction}

Operator learning has emerged as a central problem in 
scientific machine learning: given a finite collection 
of input-output pairs generated by an unknown operator 
$\cG: \cU \to \cV$ between function spaces, the goal 
is to construct a surrogate $\cA \approx \cG$ that 
generalizes to unseen inputs. The canonical motivation 
is the solution operator of a PDE, where 
$\cG$ maps a coefficient or forcing function to the 
corresponding solution, and one wishes to replace 
expensive repeated PDE solves with a fast surrogate 
at deployment time. 

The dominant paradigm for operator learning is 
neural-network-based. Architectures such as DeepONet 
\citep{Lu2021,Lu2022,Wang2021,Wang2022} and the Fourier Neural Operator 
\citep{Li2021} have achieved impressive empirical 
performance on a range of benchmarks. However, 
neural-network approaches offer limited theoretical 
guarantees: convergence rates, sample complexity, 
and the influence of design parameters such as 
training set size and output resolution are 
difficult to characterize rigorously. Kernel methods, 
by contrast, come with a well-developed approximation 
theory \citep{Wendland2004,Schaback2006,Owhadi2019,Schoelkopf2001}, deterministic error bounds \citep{Narcowich2005,Arcangeli2012,Duchon1978}, and transparent 
dependence on all design parameters, such as the correlation length of the kernel \citep{Wendland2005,Sun2026}. While classical 
kernel methods are well understood in the 
finite-dimensional function approximation setting, 
their extension to operator learning is more recent and mainly of numerical nature \citep{Kadri2016,Owhadi2023,Batlle2024,Sharma2026,Nelson2024}. This paper contributes to the 
rigorous, approximation-theoretic foundation of 
kernel-based operator learning.

The framework we study builds on the two-stage 
approach of \citet{Batlle2024, Sharma2026}, in which an 
\emph{offline} kernel regression operator $A_{\mathrm{off}}$ 
learns a discretized version of $\cG$ from training 
pairs, and an \emph{online} kernel reconstruction 
operator $A_{\mathrm{on}}$ recovers the output function from 
the predicted observations. Related kernel-based 
approaches include \citet{Long2024}, who use a 
three-step framework to first learn the PDE form 
from data and then solve it with a kernel solver, 
and \citet{Mora2025}, who propose a hybrid GP/NN 
framework approximating the operator's associated 
bilinear form. Our approach differs from both: 
we work directly within the two-stage sampling 
structure of \citet{Batlle2024} and focus on a 
self-contained, interpretable error analysis.
 
Our main theoretical contribution is an explicit budget allocation condition relating the number $N$ of training pairs, the number $n$ of input observations, and the output resolution $m$. The condition is derived from a new coupled error analysis of kernel-based operator learning. By interpreting the surrogate as a reconstruction from approximate data, we decompose the total error into reconstruction and learning components and analyze them separately. This yields quantitative scaling laws that determine how training effort must increase as the online discretization is refined, extending the analyses of \citet{Batlle2024,Sharma2026}.

Finally, we augment the framework to incorporate knowledge 
of the underlying PDE at evaluation time. In the kernel 
setting, one classical approach encodes physical constraints 
directly into the kernel $K_\cV$ used for $ A_{\mathrm{on}} $: constructing $K_\cV$ 
whose native space is contained in a constrained function 
space, such as divergence-free or curl-free vector fields 
\citep{Narcowich1994,Fuselier2008,Narcowich2007}, ensures 
every surrogate output satisfies the constraint by 
construction. This hard-constraint approach was recently 
used for operator learning in \citet{Sharma2026}. However, 
it requires the physical constraint to be independent of 
the parameter $u$ and analytically identifiable at kernel 
construction time, ruling out parameter-dependent operators 
such as $\cL_u = -\mathrm{div}(e^u \nabla \cdot)$.

We therefore follow a soft-constraint approach in the 
spirit of kernel collocation \citep{Kansa1990, Schaback2009, 
Fasshauer2014}: we penalize the PDE residual  at a finite set of collocation points
by augmenting the Tikhonov functional 
with a weighted residual term. Unlike classical kernel 
collocation, which solves a PDE from scratch, our 
method augments an already-trained operator learning 
surrogate at evaluation time, using the collocation 
penalty only in the online reconstruction step $A_{\mathrm{on}}$. 
No PDE solver is invoked, and the method adapts to new 
parameter inputs $u$ without retraining, in contrast to 
the training-time physics-informed approach of 
\citet{Wang2021}. The resulting reconstruction operator $A_{\mathrm{on}}^{\mathrm{PDE}}$ 
admits a closed-form representer theorem via the framework 
of \citet{Micchelli2005}. We also analyse the numerical costs
for the standard and the physics-informed learning method.

The rest of the paper is structured as follows. 
\cref{sec:Setting} introduces the operator learning 
framework and its generalizations over \citet{Batlle2024}, 
with particular attention to the existence of the 
discretized operator $g$. \cref{sec:KernelLearning} 
recalls the relevant kernel approximation theory, 
including sampling inequalities. The error analysis 
is developed in \cref{sec:ErrorAnalysis}, covering 
both interpolation and regularized least-squares data 
generation, and yields the budget allocation condition. 
The physics-informed extension is treated in 
\cref{sec:PhysicsInformed}. Numerical experiments 
validating the theory are presented in 
\cref{sec:Numerics}, and \cref{sec:Conclusion} 
gives a brief conclusion.


\section{The Two-Stage Operator Learning Framework}\label{sec:Setting}

We start with giving the precise setting and fixing notation used throughout. Compared to \citet{Batlle2024} we study operator learning where input and output functions are vector fields, similar to \citet{Sharma2026}. Furthermore, we discuss the connection of the fill distance of the sampled input functions and the fill distance of the data we actually use for learning the surrogate. This also yields first results on the existence of the discretized operator $ g $.

\subsection{The Setting}
Let $ \cU $ and $ \cV $ be Banach spaces and let $ \cG: \cU \to \cV $ be an operator.
The goal of operator learning is to learn $ \cG $ from given $ N $-many input/output pairs, i.e., to find a surrogate $ \cA: \cU \to \cV $ for $ \cG $ using only the training data $ \{(u_i,v_i)\}_{i=1}^{N}:=  \{(u_i, \cG(u_i))\}_{i=1}^{N} \subseteq \cU \times \cV $.

In our framework, the input/output functions are also only known by discrete observations, modeled by sampling operators: Let $ \Phi = \{ \phi_1, \dots, \phi_n \} $ and $ \Psi := \{ \psi_1, \dots, \psi_m \} $ be sets of bounded linear observational mappings on $ \cU $ and $ \cV $, respectively. Define the \emph{sampling operators} $ S_{\Phi}: \cU \to \R^{n} $ and $ S_{\Psi}: \cV \to \R^{m} $ by 
\begin{align*}
 S_{\Phi}(u) = \left[\phi_1(u), \dots, \phi_n(u)\right]^{\transpose} \quad \text{and} \quad S_{\Psi}(v) = \left[\psi_1(v), \dots, \psi_m(v) \right]^{\transpose}.
\end{align*}
With this, we can formalize the operator learning task the following way:

\begin{center}
\begin{minipage}{0.9\textwidth}
{\itshape
Let $\{(u_i,v_i)\}_{i=1}^{N} \in \mathcal U \times \mathcal V$ be such that
\[
v_i = \cG(u_i), \quad 1 \leq i \leq N.
\]
and let $ S_{\Phi}: \cU \to \R^n $ and $ S_{\Psi}: \cV \to \mathbb R^m$ be sampling operators.

Then the operator learning task is to find a surrogate $ \cA $ for $\cG$ using only the data
\[
\{(S_{\Phi}(u_i), S_{\Psi}(v_i))\}_{i=1}^{N} := \{ (\uu_i, \vv_i) \}_{i=1}^{N}.
\]
}
\end{minipage}
\end{center}

Of particular interest, cf. \citet{Batlle2024}, are operators $ \cG $ that are connected to (non-linear) partial differential operators, e.g., coefficient-to-solution operators. In this case, $ \cU $ and $ \cV $ are Banach spaces
of functions $ u: \Omega \to \R^{p} $ and 
$ v: \cD \to \R^{\ell} $ respectively, where $ \Omega \subseteq \R^k $ and 
$ \cD \subseteq \R^d $ are non-empty domains. $ \Omega $ and $ \cD $ and $ k $ and $ d $ do not 
necessarily need to coincide. The sampling operators $ S_{\Phi} $ and $ S_{\Psi} $ are assumed to be point evaluation operators, i.e., we assume there are sets of collocation points $ X = \{ \xx_1, \dots, \xx_n\} \subseteq \Omega $ and $ Y := \{ \yy_1, \dots, \yy_m \} \subseteq \cD $ and define 
$ S_X := S_{\Phi} $ and $ S_Y := S_{\Psi}$ by

\begin{align*}
 S_X(u) := \uu := \begin{bmatrix} u(\xx_1) \\ u(\xx_2) \\ \vdots \\ u(\xx_n) \end{bmatrix} \in \R^{np}, \quad \text{and} \quad S_Y(v) := \vv := \begin{bmatrix} v(\yy_1) \\ v(\yy_2) \\ \vdots \\ v(\yy_m) \end{bmatrix} \in \R^{m\ell},
\end{align*}
as point-evaluation mappings, cf. \citet{Sharma2026}.

\begin{figure}
\begin{center}
\begin{tikzpicture}[node distance = 2cm and 4cm, thick]%
 \node(1) {$ \cU $};
 \node(2) [right=of 1] {$ \cV $};
 \node(3) [below=of 1] {$ \R^{n p} $};
 \node(4) [below=of 2] {$ \R^{m \ell} $};
 \draw[->] (1) -- node [midway, above] {$ \cG $} (2);
 \draw[->] (3) -- node [midway, above] {$ g $} node[midway, below] {$ A_{\mathrm{off}} $} (4);
 \draw[->] (1) to [bend right] node [midway, left] {$ S_{\Phi} $} (3);
 \draw[->] (3) to [bend right] node [midway, right] {$ A_{le} $} (1);
 \draw[->] (2) to [bend right] node [midway, left] {$ S_{\Psi} $} (4);
 \draw[->] (4) to [bend right] node [midway, right] {$ A_{\mathrm{on}} $} (2);
\end{tikzpicture}
\end{center}
\caption{The fully symmetric learning approach of \citet{Batlle2024}: To learn the operator $ \cG $, employ the two sampling operators $ S_{\Phi} $ and $ S_{\Psi} $ and three reconstructions $ A_{le} $, $ A_{\mathrm{off}} $ and $ A_{\mathrm{on}} $. The discretized operator $ g $ is give as $ S_{\Psi} \circ \cG \circ A_{le} $. The learned surrogate is then given as $ \cA = A_{\mathrm{on}} \circ A_{\mathrm{off}} \circ S_{\Phi} $.}
\label{fig:DiagramOperatorLearning}
\end{figure}

Our methodology follows a two-stage approach. We assume the 
existence of a discretized operator $g: \R^{np} \to \R^{m\ell}$ 
satisfying $g(S_X(u)) = S_Y(\cG(u))$ for all $u \in \cU$. In practice, we will restrict to a compact model calss $ \cM \subseteq \cU $, see \cref{assum:ErrorAnalysis}. In an 
\emph{offline phase}, we learn a surrogate $A_{\mathrm{off}} \approx g$ from 
the training data $\{(\uu_i, \vv_i)\}_{i=1}^{N}$ by 
kernel regression on $\R^{np}$. In the \emph{online phase}, for a 
new input $u$, a kernel reconstruction operator 
$A_{\mathrm{on}}: \R^{m\ell} \to \cV$ recovers the output function from the 
predicted observations $A_{\mathrm{off}}(S_X(u)) \approx S_Y(\cG(u))$, giving 
the surrogate
\begin{align*}
    \cA = A_{\mathrm{on}} \circ A_{\mathrm{off}} \circ S_X: \cU \to \cV.
\end{align*}
The key insight is to view $A_{\mathrm{on}}$ as a reconstruction from 
\emph{perturbed} data: the exact discretized output 
$\vv^\dagger = S_Y(\cG(u))$ is unknown, and $A_{\mathrm{off}}(S_X(u))$ is 
only an approximation to it. This yields a clean decomposition of 
the total error into a \emph{reconstruction error}, measuring how 
well $A_{\mathrm{on}}$ recovers $\cG(u)$ from exact data $\vv^\dagger$, and a 
\emph{learning error}, measuring how well $A_{\mathrm{off}}$ approximates $g$. 
The two contributions can be analyzed independently, which is the 
basis of the error analysis in \cref{sec:ErrorAnalysis}. The 
existence and properties of $g$ are discussed in 
\cref{subsec:FillDistanceAndExistenceOfG,subsec:ExistenceG}.

This differs from \citet{Batlle2024}, where an additional 
reconstruction operator $A_{le}: \R^{np} \to \cU$ is assumed to 
exist and $g$ is expressed as $S_Y \circ \cG \circ A_{le}$. 
Further differences are discussed in \cref{subsec:Problems,subsec:Solutions}.

\subsection{Observations about the Established Framework}\label{subsec:Problems}

The operator learning framework described above is 
general, but several assumptions implicit in the 
existing literature \citep{Batlle2024, Sharma2026} 
can be relaxed without sacrificing the error analysis. 
We identify three such generalizations, each of which 
broadens the applicability of the framework.

\begin{observation}[Symmetry of the formulation]\label{problem1}
The diagram in \cref{fig:DiagramOperatorLearning} is symmetric in 
$\mathcal{U}$ and $\mathcal{V}$: it assumes the simultaneous 
existence of reconstruction operators $A_{le}: \mathbb{R}^{np} 
\to \mathcal{U}$ and $A_{\mathrm{on}}: \mathbb{R}^{m\ell} \to \mathcal{V}$. 
This is unnecessarily restrictive. In our framework, $A_{le}$ 
plays no role in either the surrogate construction or the error analysis: 
the surrogate $\mathcal{A} = A_{\mathrm{on}} \circ A_{\mathrm{off}} \circ S_X$ 
requires only a reconstruction on the $\mathcal{V}$-side. Consequently, 
$\mathcal{U}$ need not carry any additional structure, it
suffices for $\mathcal{U}$ to be a Banach space supporting bounded point 
evaluations. This asymmetry also opens the door to treating 
\emph{inverse problems}, where the roles of $\mathcal{U}$ and 
$\mathcal{V}$ are exchanged.
\end{observation}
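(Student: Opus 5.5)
The observation makes three claims: the surrogate $\cA = A_{\mathrm{on}} \circ A_{\mathrm{off}} \circ S_X$ never uses $A_{le}$; the error analysis never needs it; and $\cU$ only has to be a Banach space with bounded point evaluations. I would check each claim directly against the construction in the setting above, rather than against the symmetric diagram of \cref{fig:DiagramOperatorLearning}.

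First, I would trace the composition of the surrogate. $S_X$ is defined on $\cU$ and needs only that the functionals $u \mapsto u(\xx_j)$ be well defined and bounded. $A_{\mathrm{off}}$ acts on $\R^{np}$, and its construction uses only the finite vectors $\uu_i = S_X(u_i)$. $A_{\mathrm{on}}$ acts on $\R^{m\ell}$ and maps into $\cV$. None of these three maps involves a reconstruction into $\cU$, so $A_{le}$ does not enter the definition of $\cA$.

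Second, I would replace the definition $g = S_Y \circ \cG \circ A_{le}$ by the intrinsic requirement $g(S_X(u)) = S_Y(\cG(u))$ for $u$ in the compact model class $\cM$. This is well posed as soon as $S_X$ is injective on $\cM$, i.e.\ $S_X(u)=S_X(u')$ implies $\cG(u)=\cG(u')$. I would then note the total error splitting that the paper announces:
\begin{align*}
\cG(u) - \cA(u) = \bigl(\cG(u) - A_{\mathrm{on}}(\vv^\dagger)\bigr) + A_{\mathrm{on}}\bigl(\vv^\dagger - A_{\mathrm{off}}(S_X(u))\bigr), \qquad \vv^\dagger = S_Y(\cG(u)),
\end{align*}
which uses linearity of $A_{\mathrm{on}}$. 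The first term involves only $\cV$-side objects. The second involves $g$ only through its values on $S_X(\cM)$, bounded by the regression error of $A_{\mathrm{off}}$ on $\R^{np}$. Neither term references $A_{le}$.

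The main obstacle is existence: without $A_{le}$, one must justify that $g$ exists, meaning $S_X$ separates points of $\cM$ to the extent $\cG$ requires. If $g$ is also to be learnable by kernel regression, it needs some regularity, such as a Lipschitz or smoothness property, on $S_X(\cM)$. The observation itself only asserts that $A_{le}$ is unnecessary, so I would defer this point to \cref{subsec:FillDistanceAndExistenceOfG,subsec:ExistenceG} as the paper indicates. Finally, I would remark that $\cU$ and $\cV$ now enter asymmetrically, since only $\cV$ needs a reconstruction operator. Swapping their roles is therefore formally admissible, which is what suggests inverse problems.
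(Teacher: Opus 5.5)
Your proposal is correct and follows essentially the paper's own route. You check that $\cA = A_{\mathrm{on}} \circ A_{\mathrm{off}} \circ S_X$ never maps back into $\cU$, replace $g = S_Y \circ \cG \circ A_{le}$ by the intrinsic requirement $g(S_X(u)) = S_Y(\cG(u))$ on $\cM$, note that the decomposition \eqref{eq:ErrorDecomposition} involves only $\cV$-side objects and the learning error on $\R^{np}$, and defer existence of $g$ to the later subsections, exactly as the paper does. One harmless slip: injectivity of $S_X$ on $\cM$ is sufficient for, but not equivalent to, ``$S_X(u)=S_X(u')$ implies $\cG(u)=\cG(u')$''; the latter (or its weaker $S_Y$-level version) is what well-definedness of $g$ actually needs, so your ``i.e.'' should read ``in particular, then''.
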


\begin{observation}[Condition 3.2 in \citet{Batlle2024}]\label{problem2}
The framework of \citet{Batlle2024} assumes that the 
training inputs satisfy $u_i = A_{le}(S_\Phi(u_i))$ 
for all $1 \leq i \leq N$, i.e., that the training 
inputs are exactly reconstructible from their 
observations. While natural in their setting, this 
assumption ties the training procedure to a specific 
reconstruction operator $A_{le}$ that must be known 
prior to data collection. Our framework avoids this 
assumption entirely, since $A_{le}$ does not appear 
in our surrogate or error analysis.
\end{observation}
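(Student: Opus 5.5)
The plan is to make the observation precise by listing every object the surrogate and its error analysis depend on, and checking that $A_{le}$ never enters. Concretely, I would prove that the construction $\cA = A_{\mathrm{on}} \circ A_{\mathrm{off}} \circ S_X$ and the error bound for $\|\cG(u) - \cA(u)\|_{\cV}$, for $u$ in the model class $\cM$, use only three ingredients:
\begin{itemize}
\item[(i)] the training data $\{(\uu_i,\vv_i)\}_{i=1}^N$;
\item[(ii)] a map $g : S_X(\cM) \to \R^{m\ell}$ with $g \circ S_X = S_Y \circ \cG$ on $\cM$;
\item[(iii)] the reconstruction operator $A_{\mathrm{on}}$ on the $\cV$-side.
\end{itemize}
Once this is shown, Condition 3.2 of \citet{Batlle2024}, namely $u_i = A_{le}(S_\Phi(u_i))$, is not needed. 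It only ever served to produce (ii) in the form $g = S_Y \circ \cG \circ A_{le}$.

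The first step is the surrogate itself. $A_{\mathrm{off}}$ is kernel regression on $\R^{np}$ fitted to the pairs $(\uu_i,\vv_i)$. $A_{\mathrm{on}}$ is a kernel reconstruction on $\cV$. Neither definition refers to a reconstruction on $\cU$.

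The second step is the error split. Set $\vv^\dagger = S_Y(\cG(u)) = g(S_X(u))$ and apply the triangle inequality:
\begin{align*}
\|\cG(u) - \cA(u)\|_{\cV} \le \|\cG(u) - A_{\mathrm{on}}(\vv^\dagger)\|_{\cV} + \|A_{\mathrm{on}}(\vv^\dagger) - A_{\mathrm{on}}(A_{\mathrm{off}}(S_X(u)))\|_{\cV}.
\end{align*}
The first term is a pure reconstruction error on $\cV$. For the second term I would use linearity and boundedness (stability) of $A_{\mathrm{on}}$ to bound it by $\|A_{\mathrm{on}}\| \, \|g(S_X(u)) - A_{\mathrm{off}}(S_X(u))\|$. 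That is a standard kernel regression error for the function $g$ on $\R^{np}$. The training data for this regression satisfy $\vv_i = g(\uu_i)$ by the defining identity of $g$ alone. Hence no reconstruction of $u_i$ from $\uu_i$ is invoked, and the step where \citet{Batlle2024} use Condition 3.2 is bypassed.

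The main obstacle is ingredient (ii): making sure $g$ exists without $A_{le}$. Here I would show that $g$ is well defined exactly when $\cG$ is compatible with the fibres of $S_X$ on $\cM$. The condition is that $S_X(u) = S_X(u')$ with $u,u' \in \cM$ implies $S_Y(\cG(u)) = S_Y(\cG(u'))$. A sufficient condition is that $S_X$ is injective on $\cM$.

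To obtain regularity of $g$, which the kernel regression error estimate requires, I would proceed as follows:
\begin{itemize}
\item use compactness of $\cM$ together with continuity of $S_X$ and $\cG$;
\item show that $S_X|_{\cM}$ is a homeomorphism onto its image;
\item deduce that $g = S_Y \circ \cG \circ (S_X|_{\cM})^{-1}$ is continuous.
\end{itemize}
This inverse lives only on $S_X(\cM)$ and need not extend to a reconstruction operator on all of $\R^{np}$.

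Verifying injectivity is where the fill distance of $X$ and the sampling inequalities of \cref{sec:KernelLearning} would enter, and I expect this to be the delicate part. A sampling inequality bounds $\|u-u'\|$ by a term that vanishes when $S_X(u)=S_X(u')$, plus a term of size $C h_X^s \|u-u'\|$. For $h_X$ small enough, relative to a uniform bound on $\cM$ such as a finite-dimensional or norm-equivalence structure, this forces $u=u'$. I would defer the details to \cref{subsec:FillDistanceAndExistenceOfG,subsec:ExistenceG}.
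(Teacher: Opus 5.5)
Your plan is essentially the paper's own justification. The surrogate $\cA = A_{\mathrm{on}}\circ A_{\mathrm{off}}\circ S_X$ never references $A_{le}$, and your split is the decomposition \eqref{eq:ErrorDecomposition}. The price of dropping $A_{le}$, namely the existence of $g$ on $B = S_X(\cM)$, is handled in the paper as you propose: through injectivity of $S_X$ on $\cM$ (\cref{subsec:FillDistanceAndExistenceOfG}, and the finite-dimensional case $\cM = B_R[0]\cap\cU_{n_0}$ in \cref{subsec:ExistenceG}). Your fibre-compatibility criterion is a cleaner statement of what the paper writes as $\iota_\Phi\equiv 0$. Taken literally, $\iota_\Phi(u) = \mathrm{dist}(u,\ker S_\Phi)$ vanishes only for $u\in\ker S_\Phi$, so that condition does not express injectivity.

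The one step that does not deliver what you ask of it is the regularity of $g$. Compactness plus the homeomorphism argument yields only continuity. Term II, however, is controlled by applying \cref{prop:ErrorEstimatesInterpolationPLS} to $A_{\mathrm{off}}$, and that needs $g\in\cH_{K_b}\simeq H^\alpha(B)^{m\ell}$ with $\alpha > np/2$. No topological argument produces this. The paper simply assumes it in \cref{assum:ErrorAnalysis}(\ref{assum:ExistenceG}), just as \citet{Batlle2024} assume RKHS regularity of $f^\dagger$. You should state it as a hypothesis rather than try to derive it; it does not affect the comparison with Condition 3.2.

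Your sampling-inequality route to injectivity has a similar issue. It needs Sobolev structure on $\cU$ and an inverse estimate on $\cM - \cM$, which is essentially the finite-dimensional setting. There the paper just assumes the point evaluations separate $\cU_{n_0}$. It then obtains the constant in \eqref{eq:AssumInequalityExistenceG} from the bounded left inverse of an injective linear map. This keeps $\cU$ free of the extra structure that \cref{problem1} seeks to avoid.
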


\begin{observation}[Fill distance in $\mathcal{U}$]\label{problem3}
For the error analysis, it is necessary to assume that the discrete 
training inputs $\{S_X(u_i)\}_{i=1}^{N} \subseteq \R^{np}$ 
are sufficiently rich, typically expressed via a small fill distance. 
However, this condition on $\{S_X(u_i)\}$ carries limited information 
about the original functions $\{u_i\} \subseteq \mathcal{U}$. The sampling operator $S_X$ is in general not injective: 
distinct functions $u, u' \in \mathcal{U}$ may satisfy $S_X(u) = S_X(u')$, 
so fill distance in $\R^{np}$ does not imply coverage of 
$\mathcal{U}$. This persists even as the number of observations grows, as the classical example of Runge \citet{Runge1905} shows.
\end{observation}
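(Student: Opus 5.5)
The plan is to make the observation precise by proving two claims. First, for any finite point set $X$, the map $S_X$ has a nontrivial kernel on $\cU$, so its fibres are unbounded. Second, these fibres do not shrink as $n \to \infty$, so a small fill distance of $\{S_X(u_i)\}_{i=1}^N \subseteq \R^{np}$ cannot control a covering radius of $\{u_i\}$ in $\cU$. Throughout I assume only what the framework already uses: $\cU$ is an infinite-dimensional Banach space of functions $u:\Omega \to \R^p$ on which point evaluations are bounded, so that $S_X$ is a bounded linear map.

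\textbf{Non-injectivity.} $S_X:\cU \to \R^{np}$ is linear and bounded, with finite-dimensional range. Hence $\ker S_X$ is a closed subspace of codimension at most $np$, and it is nontrivial because $\cU$ is infinite-dimensional. Pick $b \in \ker S_X$ with $\|b\|_\cU = 1$. For any $u$ and any $t>0$, the functions $u$ and $u' = u + t b$ then satisfy $S_X(u) = S_X(u')$ while $\|u - u'\|_\cU = t$. In the typical case that $\cU$ contains smooth functions, $b$ can be written down explicitly: take a bump supported in a ball of $\Omega \setminus X$. It follows that a training set whose inputs all lie in one fibre $u + \ker S_X$ has discrete fill distance zero in $\R^{np}$, while its $\cU$-covering radius is as large as one likes. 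So fill distance in $\R^{np}$ alone says nothing about coverage of $\cU$.

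\textbf{Persistence under refinement.} The kernel argument above works for each fixed $X$, so it already shows that refinement does not remove the obstruction. I would add a concrete example to show that the discrepancy need not even stay bounded. Take $\Omega=[-1,1]$, $\cU = C([-1,1])$, equispaced nodes $X_n$, and Runge's function $f(x) = 1/(1+25x^2)$. Let $p_n$ be its polynomial interpolant on $X_n$. Then $S_{X_n}(p_n) = S_{X_n}(f)$ for every $n$, yet $\|f - p_n\|_\infty \to \infty$ by \citet{Runge1905}. Both functions are smooth, so the same conclusion holds in any smoother norm as well.

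The main obstacle is making sure the non-identifying perturbation stays inside the function class actually used later, namely the compact model class $\cM$ of \cref{assum:ErrorAnalysis}, rather than merely inside $\cU$. To handle this, I would phrase the statement for $\cU$ or for a ball in a smoother space that contains bump functions. I would then note that compactness of $\cM$ is exactly the extra structure needed to restore identifiability. That motivates the subsequent subsections on the existence of $g$.
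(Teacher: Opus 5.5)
Your proposal is correct and follows the same route as the paper, which supports the observation only informally through the non-injectivity of the finite-rank map $S_X$ and Runge's equispaced example; your kernel/bump-function argument and the explicit pair $(f,p_n)$ make those two points rigorous. Drop the closing remark that compactness of $\cM$ restores identifiability: your own segment $\{u+tb : 0\le t\le T\}$ is compact and lies in a single fibre of $S_X$, so compactness at best makes fibre diameters within $\cM$ shrink as $X$ is refined, while the paper gets the exact injectivity needed for $g$ by restricting to a finite-dimensional $\cU_{n_0}$ (i.e.\ assuming $\iota_\Phi \equiv 0$).
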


\subsection{Proposed Framework}\label{subsec:Solutions}

\begin{figure}[htbp]
\centering
\begin{subfigure}[t]{0.4\textwidth}
\centering
\begin{tikzpicture}[node distance = 1.5cm and 3cm, thick]
    \node(1) {$\mathcal{U}$};
    \node(2) [right=of 1] {$\mathcal{V}$};
    \node(3) [below=of 1] {$\mathbb{R}^{np}$};
    \node(4) [below=of 2] {$\mathbb{R}^{m\ell}$};
    \draw[->] (1) -- node [midway, above] {$\mathcal{G}$} (2);
    \draw[->] (3) -- node [midway, above] {$A_{\mathrm{off}}$} (4);
    \draw[->] (1) to [bend right] node [midway, left] {$S_X$} (3);
    \draw[->] (2) to [bend right] node [midway, left] {$S_Y$} (4);
    \draw[->] (4) to [bend right] node [midway, right] {$A_{\mathrm{on}}$} (2);
\end{tikzpicture}
\caption{Forward problem.}
\label{fig:DiagramForwardProblem}
\end{subfigure}
\hfill
\begin{subfigure}[t]{0.4\textwidth}
\centering
\begin{tikzpicture}[node distance = 1.5cm and 3cm, thick]
    \node(1) {$\mathcal{U}$};
    \node(2) [right=of 1] {$\mathcal{V}$};
    \node(3) [below=of 1] {$\mathbb{R}^{np}$};
    \node(4) [below=of 2] {$\mathbb{R}^{m\ell}$};
    \draw[<-] (1) -- node [midway, above] {$\mathcal{G}^{-1}$} (2);
    \draw[<-] (3) -- node [midway, above] {$A_{\mathrm{off}}$} (4);
    \draw[->] (1) to [bend right] node [midway, left] {$S_X$} (3);
    \draw[->] (3) to [bend right] node [midway, right] {$A_{\mathrm{on}}$} (1);
    \draw[->] (2) to [bend right] node [midway, left] {$S_Y$} (4);
\end{tikzpicture}
\caption{Inverse problem.}
\label{fig:DiagramInverseProblem}
\end{subfigure}
\caption{Asymmetric diagrams addressing 
\cref{problem1} and \cref{problem2}: only one 
reconstruction operator is required in each case.}
\label{fig:ProposedChanges}
\end{figure}

It turns out that \cref{problem1} and \cref{problem2} can be resolved 
simultaneously by augmenting the general setting. Instead of assuming 
the symmetric diagram in \cref{fig:DiagramOperatorLearning}, we propose 
to separate the forward and inverse problems, as summarized in 
\cref{fig:ProposedChanges}. In both cases, only a single online reconstruction 
operator is required, either on the $ \cV $ side for the forward problem, or on the 
$ \cU $ side for the inverse problem. This asymmetric formulation resolves both observations at once: 
since $A_{le}$ no longer appears, the assumption $u_i = 
A_{le}(S_\Phi(u_i))$ of \citet{Batlle2024} is not needed, and 
$\mathcal{U}$ need not carry any additional structure. In 
particular, $\mathcal{U}$ may be any Banach space of functions supporting 
bounded point evaluations.

\subsection{Fill Distance and Existence of the Discretized Operator}\label{subsec:FillDistanceAndExistenceOfG}

Addressing \cref{problem3} is more involved. We begin by formally defining the 
fill distance.

\begin{definition}[Fill distance]
Let $(V, \|\cdot\|_V)$ be a normed space and $D \subseteq V$ a bounded 
domain. Let $\Xi := \{\xxi_1, \dots, \xxi_M\} \subseteq D$ be a discrete 
set of points. Then the \emph{fill distance} $h_{\Xi, D}$ of $\Xi$ in 
$D$ is defined as
\begin{align*}
    h_{\Xi, D} := \sup_{\xxi \in D} \min_{1 \leq i \leq M} 
    \|\xxi - \xxi_i\|_V.
\end{align*}
\end{definition}

The following theorem links the fill distance of the training inputs 
$\{u_i\}$ in $\mathcal{U}$ to the fill distance of their discretizations 
$\{\uu_i\}$ in $\mathbb{R}^{np}$.

\begin{theorem}\label{thrm:SolvingProblem3}
Let $\mathcal{M} \subseteq \mathcal{U}$ be a compact model class and 
let $\{u_1, \dots, u_N\} \subseteq \mathcal{M}$. Assume that $S_\Phi$ 
consists of bounded linear observational mappings and that there exists $h_0 > 0$ 
such that
\begin{align*}
    h_{\{\uu_i\}, S_\Phi(\mathcal{M})} \leq h_0.
\end{align*}
Define $\iota_\Phi: \mathcal{M} \to \mathbb{R}_{\geq 0}$ by
\begin{align*}
    \iota_\Phi(u) := \inf_{w \in \ker S_\Phi} \|u - w\|_{\mathcal{U}},
\end{align*}
and assume there exists a constant $C > 0$ such that for all $u, 
\widetilde{u} \in \mathcal{M}$,
\begin{align}\label{eq:AssumInequalityExistenceG}
    \|u - \widetilde{u}\|_{\mathcal{U}} \leq C\|S_\Phi(u) - 
    S_\Phi(\widetilde{u})\|_2 + \iota_\Phi(u) + \iota_\Phi(\widetilde{u}).
\end{align}
Then
\begin{align*}
    \sup_{u \in \mathcal{M}} \min_{1 \leq i \leq N} 
    \|u - u_i\|_{\mathcal{U}} \leq C h_{\{\uu_i\}, 
    S_\Phi(\mathcal{M})} + 2\sup_{u \in \mathcal{M}} \iota_\Phi(u)
\end{align*}
holds for all $ u \in \cM $.
\end{theorem}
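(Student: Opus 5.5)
The plan is a direct pointwise argument: fix $u \in \cM$, bound $\min_i \|u-u_i\|_\cU$ by a quantity independent of $u$, and then take the supremum over $u \in \cM$.

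Fix $u \in \cM$ and set $\uu := S_\Phi(u) \in S_\Phi(\cM)$. By definition of the fill distance $h := h_{\{\uu_i\}, S_\Phi(\cM)}$, we have $\min_{1\le i\le N}\|\uu - \uu_i\|_2 \le h \le h_0$. Since the minimum is over a finite index set, it is attained. So we can pick an index $i^\ast$ with $\|S_\Phi(u) - S_\Phi(u_{i^\ast})\|_2 \le h$. Both $u$ and $u_{i^\ast}$ lie in $\cM$, so the assumed inequality \eqref{eq:AssumInequalityExistenceG} applies with $\widetilde u = u_{i^\ast}$:
\begin{align*}
\min_{1\le i\le N}\|u-u_i\|_\cU \le \|u-u_{i^\ast}\|_\cU \le C\,\|S_\Phi(u)-S_\Phi(u_{i^\ast})\|_2 + \iota_\Phi(u) + \iota_\Phi(u_{i^\ast}).
\end{align*}
Next we bound each term on the right. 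The first is at most $Ch$. The last two are each at most $\sup_{w\in\cM}\iota_\Phi(w)$, which gives the bound $Ch + 2\sup_{w\in\cM}\iota_\Phi(w)$. This bound does not depend on $u$, so taking the supremum over $u\in\cM$ yields the claim.

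The only points needing care are the following.
\begin{itemize}
\item The attainment of the minimum: it holds because $N$ is finite. If one prefers to avoid this, an $\varepsilon$-argument works equally well.
\item Finiteness of $\sup_{\cM}\iota_\Phi$: $\iota_\Phi$ is $1$-Lipschitz, being a distance to a subspace, so it is continuous on the compact set $\cM$. This is needed only so that the right-hand side is meaningful; otherwise the statement holds trivially.
\end{itemize}
There is no real obstacle beyond applying the hypothesis \eqref{eq:AssumInequalityExistenceG} with the correct choice of $\widetilde u$.
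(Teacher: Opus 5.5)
Your proposal is correct and follows the paper's proof essentially verbatim: fix $u \in \cM$, use the fill distance to pick $u_{i^\ast}$ with $\|S_\Phi(u)-S_\Phi(u_{i^\ast})\|_2 \le h_{\{\uu_i\}, S_\Phi(\cM)}$, apply the stability inequality \eqref{eq:AssumInequalityExistenceG} with $\widetilde u = u_{i^\ast}$, bound both $\iota_\Phi$ terms by $\sup_{\cM}\iota_\Phi$, and take the supremum over $u$. Your justification for the existence of $i^\ast$ (the minimum is over finitely many indices) is the correct one; the paper instead cites compactness of $\cM$ and continuity of $S_\Phi$, which is not what guarantees the minimum is attained.
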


\begin{proof}
Let $u \in \mathcal{M}$ be fixed. Since $\mathcal{M}$ is compact and 
$S_\Phi$ consists of continuous mappings, there exists an index $1 \leq i \leq N$ such 
that $\|S_\Phi(u) - S_\Phi(u_i)\|_2 \leq h_{\{\uu_i\}, 
S_\Phi(\mathcal{M})}$. Then \eqref{eq:AssumInequalityExistenceG} yields
\begin{align*}
    \|u - u_i\|_{\mathcal{U}} \leq C h_{\{\uu_i\}, 
    S_\Phi(\mathcal{M})} + \iota_\Phi(u) + \iota_\Phi(u_i) 
    \leq C h_{\{\uu_i\}, S_\Phi(\mathcal{M})} + 
    2\sup_{u \in \mathcal{M}} \iota_\Phi(u).
\end{align*}
Taking the minimum over $i$ and then the supremum over $u \in 
\mathcal{M}$ yields the claim.
\end{proof}

The term $\iota_\Phi$ in \cref{thrm:SolvingProblem3} quantifies the 
intrinsic non-iden\-ti\-fia\-bi\-li\-ty of the sampling operator: functions $u$ 
and $\widetilde{u}$ satisfying $S_\Phi(u) = S_\Phi(\widetilde{u})$, 
i.e., $u - \widetilde{u} \in \ker S_\Phi$, are indistinguishable from 
the observations alone. The kernel $\ker S_\Phi$ can therefore be 
interpreted as the space of \emph{invisible perturbations}. In 
particular, if $\mathcal{U}$ is infinite-dimensional, $\iota_\Phi$ 
will in general not vanish identically on $\mathcal{M}$.

In the case where $S_\Phi = S_X$ is a point evaluation operator, 
$\iota_\Phi(u) $ measures the part 
of $u$ that is not observed at the points $X$.

Inequality \eqref{eq:AssumInequalityExistenceG} is a Lipschitz stability condition for the inverse sampling map. In the injective case $ \iota_{\Phi} \equiv 0 $, the assumption reduces to $ S_{\Phi}^{-1}: S_{\Phi}(\cM) \to \cM $ is $ C $-Lipschitz, i.e., that the inverse problem of recovering $ u $ from its observations is well-posed on $ \cM $. A concrete setting in which \eqref{eq:AssumInequalityExistenceG} is satisfied is discussed in \cref{subsec:ExistenceG}.

\begin{remark}[Solution for \cref{problem3}]
\cref{thrm:SolvingProblem3} shows that the fill-distance of $\{u_i\}$ 
in $\mathcal{M}$ decomposes into two contributions: the fill distance 
of $\{\uu_i\}$ in $S_\Phi(\mathcal{M})$, which is directly 
controllable by the choice of training data, and the intrinsic 
non-identifiability term $\sup_{u \in \mathcal{M}} \iota_{\Phi}(u)$, 
which is an irreducible consequence of the non-injectivity of $S_\Phi$. 
In our error analysis, we therefore work directly with the fill distance 
of $\{\uu_i\}$ in $S_\Phi(\mathcal{M}) \subseteq \mathbb{R}^{np}$, 
treating the non-identifiability term as a separate contribution, which we ignore here.
We further note that the assumption $\iota_{\Phi} \equiv 0$ 
on $\mathcal{M}$, i.e., that $S_\Phi$ is effectively injective on the 
model class, is sufficient for the existence of a well-defined 
discretized operator $g: \mathbb{R}^{np} \to \mathbb{R}^{m\ell}$ 
satisfying $g(S_X(u)) = S_Y(\mathcal{G}(u))$ for all $u \in \mathcal{M}$.
\end{remark}

\section{Kernel-based Learning}\label{sec:KernelLearning}

We give a brief introduction to kernel-based learning of vector-valued functions, which forms the theoretical backbone of our two-stage surrogate construction. Recall that the surrogate 
$\cA = A_{\mathrm{on}} \circ A_{\mathrm{off}} \circ S_X$ involves 
two distinct kernel-based learning problems: a regression problem 
for $A_{\mathrm{off}}$ on $\R^{np}$, approximating the 
discretized operator $g$, and a reconstruction problem for 
$A_{\mathrm{on}}$ in $\cV$, recovering an output function 
from disturbed data. The present section develops the tools needed 
for both. Throughout, we assume $d, r \in \N$ and 
$D \subseteq \R^d$ is an arbitrary domain. For background on kernel methods we refer to 
\citet{Wendland2004} for the approximation theory perspective and, for the statistical learning perspective, to 
\citet{Steinwart2008} .

\subsection{Matrix-Valued Positive Definite Kernels and RKHSs}\label{subsec:MatrixValuedKernels}

\begin{definition}
We call a continuous kernel $ K: D \times D \to \R^{r \times r} $ \emph{positive (semi-)definite}, if 
\begin{enumerate}
 \item $ K $ is symmetric, i.e., $ K(\xxi, \zzeta) = K(\zzeta,\xxi)^{\transpose} $, for all $ \xxi, \zzeta \in D $, and 
 \item the quadratic form 
  \begin{align} \label{eq:QuadraticForm}
   \sum_{i,j = 1}^{M} \langle \ww_i, K(\xxi_i,\xxi_j) \ww_j \rangle_2
 \end{align}
 is non-negative for all $ M \in \N $, all pair-wise distinct $ \{ \xxi_1, \dots, \xxi_M \} \subseteq D $ and all $ \ww_1, \dots \ww_M \in \R^r $, not all being the zero vector.
\end{enumerate}
 
The kernel is called \emph{positive definite} if the quadratic form 
\eqref{eq:QuadraticForm} is equal to zero only if
$ \ww_i = 0 $ for indices $ 1 \leq i \leq N $ where $ \xxi_i \neq \xxi_j $, $ i \neq 
j $.
\end{definition}

A key feature of positive definite kernels is that they implicitly 
define a \emph{feature map} $\varphi: D \to \cH$ such that 
$K(\xxi, \zzeta) = \langle \varphi(\xxi), \varphi(\zzeta) \rangle_{\cH}$, 
allowing learning algorithms to operate in a potentially 
infinite-dimensional \emph{feature space} $\mathcal{H}$ using only 
kernel evaluations, this is known as the \emph{kernel trick} 
\citep{Steinwart2008}. The associated feature space is a Hilbert space 
of functions with a special structure, called \emph{native space} of the kernel $ K $ or a \emph{reproducing 
kernel Hilbert space} (RKHS) with kernel $K$.

\begin{definition}\label{def:RKHS}
 A Hilbert space $ (\cH, \langle \cdot, \cdot \rangle_{\cH}) $ of vector-valued functions
 $ f: D \to \R^r $ is a 
 \emph{reproducing kernel Hilbert space} if there is a kernel $ K: D \times
 D \to \R^{r \times r}$ that satisfies
 \begin{enumerate}
  \item $ K(\cdot, \xxi)\ww \in \cH $ for all $ \xxi \in D $ and $ \ww \in \R^r $,
  \item\label{def:RKHSprop2} $ \langle v, K(\cdot,\xxi)\ww \rangle_{\cH} = \langle v(\xxi),\ww \rangle_{2} $ for all $ v \in \cH $, $ \ww \in \R^r $ and $ \xxi \in D $.
 \end{enumerate}
The latter is called \emph{reproducing property} of $ K $.
\end{definition}

A particularly important class of matrix-valued kernels are 
\emph{separable kernels}, which decouple two distinct modeling 
aspects: the spatial regularity of the function, encoded by 
scalar-valued positive definite kernels $k_i: D \times D \to 
\mathbb{R}$, and the interaction between output components, 
encoded by symmetric positive semidefinite matrices $B_i \in 
\mathbb{R}^{r \times r}$. This separation makes both the 
theoretical analysis and the computational implementation 
tractable. For fixed $b \in \mathbb{N}$, we define a 
matrix-valued kernel $K: D \times D \to \mathbb{R}^{r \times r}$ by
\begin{align}\label{eq:SeparableKernel}
    K(\xxi, \zzeta) := \sum_{i=1}^{b} k_i(\xxi, \zzeta) B_i, 
    \quad \xxi, \zzeta \in D.
\end{align}
In the special case $b = 1$ and $B_1 = I$, the kernel acts 
componentwise and the associated RKHS is the tensor product 
of identical scalar RKHSs.

Of particular interest are translation-invariant kernels on $ \R^d $. A kernel $ K: \R^d \times \R^d \to \R^{r \times r} $ is called \emph{translation invariant} if there exists a function $ \Phi: \R^d \to \R^{r \times r} $ such that 
\begin{align*}
 K(\xxi, \zzeta) := \Phi(\xxi - \zzeta), \quad \xxi,\zzeta \in \R^d.
\end{align*}
In the separable setting \eqref{eq:SeparableKernel}, this corresponds to choosing translation-invariant kernels $ k_i = \Phi_i $.

For the error analysis later, we want to identify RKHSs with known function spaces. To this end, we recall the definition of Sobolev spaces of vector-valued functions: For $ 1 \leq q \leq \infty $, let $L_q(D)^r$ denote 
the standard Lebesgue space of $q$-integrable 
vector-valued functions $f: D \to \R^r$, equipped 
with the norm $\|\cdot\|_{L_q(D)^r}$.

For $ s \in \N_0 $, the Sobolev space $ W_q^s(D)^r $ is given by 
\begin{align*}
  W^s_q(D)^r := \{ f \in L_q(D)^r \; : \: \| f \|_{W^s_q(D)^r} < \infty \},
\end{align*}
where 
\begin{align*}
 \| f \|_{W^s_q(D)^r} := \left( \sum_{l = 0}^s | f |^q_{W^l_q(D)^r} \right)^{\frac{1}{q}} \ \text{, with } | f |_{W^l_q(D)^r} := \left( \sum_{| \aalpha | = l} \| D^{\aalpha} f \|_{L_q(D)^r}^q \right)^{\frac{1}{q}}.
\end{align*}
Fractional orders $ s \geq 0 $ can be defined by, e.g., interpolation \citep{Brenner2008}.

In the Hilbert space case $ q = 2 $, Sobolev spaces admit a convenient Fourier characterization. For $ s \geq 0 $, we define 
\begin{align*}
 H^s(\R^d)^r := \{ f \in L_2(\R^d)^r \; : \; (1 + \| \cdot \|^2)^{s/2} \widehat{f} \in L_2(\R^d)^r \},
\end{align*}
equipped with the inner product 
\begin{align}\label{eq:InnerProductSobolevSpace}
 \langle f,g \rangle_{H^{s}(\R^d)^r} := \int_{\R^d} (1 + \| \oomega \|^2)^s \overline{\widehat{f}(\oomega)}^{\transpose} \widehat{g}(\oomega) \ d \oomega,
\end{align}
where the Fourier transform is applied componentwise.

The connection between RKHSs of translation-invariant kernels and Sobolev spaces is well-understood in the scalar-valued case \citep{Wendland2004}.

\begin{corollary}\label{cor:ScalarValuedSobolevKernel}
 Let $ k: \R^d \times \R^d \to \R $ be a RBF associated to $ \Phi \in L_1(\R^d) $. Let $ s > d/2 $ and assume that there are constants $ c_1, c_2 > 0 $ such that
 \begin{align*}
  c_1 (1 + \| \oomega \|_2^2)^{-s} \leq \widehat{\Phi}(\oomega) \leq c_2  (1 + \| \oomega \|_2^2)^{-s}, \quad \oomega \in \R^d.
 \end{align*}
Then the RKHS $ \cH $ with kernel $ k $ coincides with $ H^s(\R^d) $, with equivalent norms.
\end{corollary}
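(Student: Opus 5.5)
The plan is to identify the RKHS $\cH$ of $k$ with $H^s(\R^d)$ through its Fourier representation. Since $\Phi \in L_1(\R^d)$ and $\widehat{\Phi}$ is bounded above by $c_2(1+\|\oomega\|_2^2)^{-s}$ with $s>d/2$, we have $\widehat{\Phi} \in L_1(\R^d)$. Hence $\Phi$ is continuous, and the Fourier inversion $\Phi(\xxi) = (2\pi)^{-d/2}\int \widehat{\Phi}(\oomega) e^{i\xxi^\transpose\oomega}\,d\oomega$ holds. The lower bound gives $\widehat{\Phi}>0$, so by Bochner's theorem $k$ is positive definite.

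The first step is to define the candidate space
\begin{align*}
\cH_\Phi := \{ f \in L_2(\R^d)\cap C(\R^d) : \widehat{f}/\sqrt{\widehat{\Phi}} \in L_2(\R^d)\}
\end{align*}
with inner product $\langle f,g\rangle_\Phi := (2\pi)^{-d/2}\int \widehat{f}\,\overline{\widehat{g}}/\widehat{\Phi}\,d\oomega$. We check the RKHS properties of \cref{def:RKHS} directly. First, $k(\cdot,\zzeta) = \Phi(\cdot-\zzeta)$ has transform $e^{-i\zzeta^\transpose\oomega}\widehat{\Phi}$, so its $\Phi$-norm is $\int\widehat{\Phi}<\infty$. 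Second, for $f\in\cH_\Phi$ we have $\widehat f \in L_1$ by Cauchy--Schwarz, because $\int|\widehat f| \le \|\widehat f/\sqrt{\widehat\Phi}\|_{L_2}\,\|\widehat\Phi\|_{L_1}^{1/2}$. Fourier inversion then yields $\langle f, k(\cdot,\zzeta)\rangle_\Phi = f(\zzeta)$. Completeness follows from the isometry $f\mapsto \widehat f/\sqrt{\widehat\Phi}$ onto a closed weighted $L_2$ space, together with the $L_1$-bound, which ensures that limits are continuous functions. Uniqueness of the RKHS for a given kernel (Moore--Aronszajn) then gives $\cH=\cH_\Phi$ with this norm.

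The second step is the comparison of norms. From the two-sided bound on $\widehat{\Phi}$,
\begin{align*}
c_2^{-1}(1+\|\oomega\|_2^2)^{s}\,|\widehat f(\oomega)|^2 \le |\widehat f(\oomega)|^2/\widehat\Phi(\oomega) \le c_1^{-1}(1+\|\oomega\|_2^2)^{s}\,|\widehat f(\oomega)|^2 .
\end{align*}
Integrating, and comparing with \eqref{eq:InnerProductSobolevSpace} in the case $r=1$, gives $c_2^{-1}\|f\|_{H^s}^2\lesssim\|f\|_\Phi^2\lesssim c_1^{-1}\|f\|_{H^s}^2$, up to the fixed factor $(2\pi)^{-d/2}$. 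The spaces therefore coincide as sets, with the only remaining point being continuity. Every $f\in H^s$ with $s>d/2$ has $\widehat f\in L_1$ by the same Cauchy--Schwarz argument, so it has a continuous representative. This is the Sobolev embedding in the form we need.

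The main obstacle is the rigorous characterization of $\cH$ as $\cH_\Phi$: the reproducing identity must hold for all $f$, and completeness requires care. Everything else is a pointwise inequality. Since the paper cites \citet{Wendland2004} for this characterization (Theorem 10.12 there), I would invoke it rather than reprove it, so that the proof essentially reduces to the norm comparison.
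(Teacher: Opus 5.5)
Your proposal is correct and takes essentially the same route as the paper. The paper gives no proof of its own and defers to \citet{Wendland2004}, whose argument is exactly the one you give: the Fourier characterization of the native space of a continuous, integrable, positive definite $\Phi$ (Theorem 10.12 there), followed by the pointwise comparison of $1/\widehat{\Phi}$ with $(1+\|\oomega\|_2^2)^{s}$, with integrability of $\widehat{\Phi}$, continuity of representatives, and the harmless factor $(2\pi)^{-d/2}$ relative to the paper's $H^s$ normalization all handled correctly.
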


Prominent classes of kernels satisfying the assumptions of \cref{cor:ScalarValuedSobolevKernel} are Mat\'ern kernels \citep{Matern1986} and compactly supported Wendland kernels \citep{Wendland1995, Wendland2004}.

Using the separable construction \eqref{eq:SeparableKernel}, this result extends directly to the vector-valued setting.

\begin{corollary}
 Let $ (k_i)_{1 \leq i \leq b} $ and $ (B_i)_{1 \leq i \leq b} $ be as in \eqref{eq:SeparableKernel} and assume that each $ k_i $ satisfies the assumptions of \cref{cor:ScalarValuedSobolevKernel} with the same smoothness parameter $ s > d / 2 $. Then the RKHS associated with the matrix-valued kernel $ K $ defined as in \eqref{eq:SeparableKernel} coincides with $ H^s(\R^d)^r $. Moreover, the induced norm is equivalent to the Sobolev norm induced by the inner product in \eqref{eq:InnerProductSobolevSpace}, with constants depending on $ c_1, c_2 $ and the spectral bounds of the matrices $ B_i $.
\end{corollary}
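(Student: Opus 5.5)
We argue that the RKHS of $K$ and $H^s(\R^d)^r$ coincide as sets with equivalent norms by comparing their Fourier-side norms. For a translation-invariant, matrix-valued kernel $K(\xxi,\zzeta)=\Phi(\xxi-\zzeta)$ with $\Phi=\sum_{i=1}^b \Phi_i B_i$ integrable, the native space has the characterization (the matrix analogue of the scalar result behind \cref{cor:ScalarValuedSobolevKernel}):
\begin{align*}
 \|f\|_{\cH}^2 = \int_{\R^d} \overline{\widehat f(\oomega)}^{\transpose}\, \widehat{\Phi}(\oomega)^{-1}\, \widehat f(\oomega)\, d\oomega,
 \qquad \widehat{\Phi}(\oomega) = \sum_{i=1}^b \widehat{\Phi}_i(\oomega) B_i .
\end{align*}
This holds for all $f\in L_2(\R^d)^r$ for which the right-hand side is finite, provided $\widehat\Phi(\oomega)$ is symmetric positive definite for almost every $\oomega$. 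We would either cite this characterization or verify it directly: check that this space contains $K(\cdot,\xxi)\ww$, and that the reproducing property of \cref{def:RKHS} follows from Fourier inversion exactly as in the scalar proof.

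\textbf{Matrix bounds.} With the characterization in hand, everything reduces to two-sided bounds on the matrix $\widehat\Phi(\oomega)$. By hypothesis,
\begin{align*}
 c_1(1+\|\oomega\|^2)^{-s} B_i \preceq \widehat\Phi_i(\oomega) B_i \preceq c_2(1+\|\oomega\|^2)^{-s} B_i ,
\end{align*}
since each $B_i$ is positive semidefinite and $\widehat\Phi_i(\oomega)>0$. Summing over $i$ gives
\begin{align*}
 c_1 \lambda_{\min}(B)(1+\|\oomega\|^2)^{-s} I \preceq \widehat\Phi(\oomega) \preceq c_2\lambda_{\max}(B)(1+\|\oomega\|^2)^{-s} I,
 \qquad B:=\sum_{i=1}^b B_i .
\end{align*}
Inverting reverses the Loewner order. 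Inserting the result into the characterization yields
\begin{align*}
 \frac{1}{c_2\lambda_{\max}(B)}\|f\|_{H^s(\R^d)^r}^2 \le \|f\|_{\cH}^2 \le \frac{1}{c_1\lambda_{\min}(B)}\|f\|_{H^s(\R^d)^r}^2 ,
\end{align*}
where the $H^s$ norm is the one induced by \eqref{eq:InnerProductSobolevSpace}. This gives both the set equality and the norm equivalence, with constants depending only on $c_1$, $c_2$ and the spectral bounds of the $B_i$. Alternatively, when $b=1$ and $B_1=I$, one can apply \cref{cor:ScalarValuedSobolevKernel} componentwise; the general case is exactly the matrix estimate above.

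\textbf{Main obstacle.} The key hidden hypothesis is $\lambda_{\min}(B)>0$. If $\sum_i B_i$ is singular, the native space is confined to functions with values in $\operatorname{range}(B)$ and cannot equal $H^s(\R^d)^r$. The proof must therefore assume that $\sum_i B_i$ is positive definite, which is the natural reading of ``spectral bounds''. The other delicate point is justifying the matrix-valued native-space Fourier characterization. If we prove it rather than cite it, we would first note that $\widehat\Phi(\oomega)$ is continuous and positive definite, then define $\cH$ by the displayed norm, and finally show that the span of the $K(\cdot,\xxi)\ww$ is dense in it, since any element orthogonal to all of them has vanishing values.
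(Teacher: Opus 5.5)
Your argument is correct. Note that the paper gives no proof of this corollary at all; it only remarks that the scalar result ``extends directly'' via the separable construction.

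\textbf{The positivity hypothesis.} Your observation that $\sum_i B_i$ must be positive definite is a genuine correction to the statement, not just a reading of ``spectral bounds.'' For $b=1$ and singular $B_1$, every element of the RKHS takes values in $\operatorname{range}(B_1)$, so the claimed identity with $H^s(\R^d)^r$ is false. Since the $k_i$ are strictly positive definite, $\sum_i B_i \succ 0$ is exactly the condition for $K$ itself to be positive definite, which is presumably what the author tacitly assumes.

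\textbf{The Loewner step.} Write $\rho(\oomega)=(1+\|\oomega\|^2)^{-s}$. From $c_1\rho(\oomega)B \preceq \widehat\Phi(\oomega) \preceq c_2\rho(\oomega)B$ you get two-sided bounds on $\widehat\Phi(\oomega)^{-1}$. This gives equivalence constants $1/(c_2\lambda_{\max}(B))$ and $1/(c_1\lambda_{\min}(B))$, up to a convention-dependent factor $(2\pi)^{-d/2}$: the usual native-space formula carries it, while \eqref{eq:InnerProductSobolevSpace} does not. When you verify the characterization, note that continuity of the elements, which the scalar native-space theorem builds into the definition of the space, is automatic here. The lower bound on $\widehat\Phi^{-1}$ together with $s>d/2$ forces $\widehat f\in L_1$.

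\textbf{Comparison with the route the paper's wording suggests.} That route avoids Fourier analysis of matrices entirely. The RKHS of $k_iB_i$ is $\{B_i^{1/2}h : h\in\cH_{k_i}^r\}$, which by the scalar corollary is the set of $H^s$ functions with values in $\operatorname{range}(B_i)$. By Aronszajn's theorem on sums of kernels, the RKHS of $K$ is the sum of these spaces with the infimal-decomposition norm. This sum is $H^s(\R^d)^r$ exactly when $\sum_i\operatorname{range}(B_i)=\operatorname{range}(\sum_i B_i)=\R^r$.

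That argument uses the scalar identification $\cH_{k_i}\simeq H^s$ only as a black box. It therefore transfers to any setting where that identification holds, such as the restricted kernels on bounded domains in the subsequent remark. Its constants are cruder, however: for the upper bound you must exhibit an explicit decomposition such as $f_i=B_i(\sum_j B_j)^{-1}f$.

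Your approach instead requires the matrix-valued Fourier characterization of the native space, which you must cite or prove as you sketch. In exchange it yields sharp, explicit constants depending only on $c_1$, $c_2$, $\lambda_{\min}(\sum_iB_i)$ and $\lambda_{\max}(\sum_iB_i)$.
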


This construction shows that separable kernels provide a systematic way to lift scalar Sobolev kernels to vector-valued function spaces, while allowing for flexible modeling of correlations between output components via the matrices $ B_i $.

\begin{remark}[Restriction to bounded domains]
In many applications, functions are defined on a bounded domain \( D \subseteq \R^d \). 
Kernels for Sobolev spaces on \(D\) can be obtained by restriction. 
Let \( k : \R^d \times \R^d \to \R \) be a translation-invariant kernel whose RKHS is \( H^s(\R^d) \), and define
\[
k|_D(\xxi,\zzeta) := k(\xxi,\zzeta), \qquad \xxi,\zzeta \in D.
\]
Then the RKHS associated with \(k|_D\) is norm-equivalent to \( H^s(D) \), provided that \(D\) is, e.g., a bounded Lipschitz domain.
An analogous construction applies to matrix-valued kernels via \eqref{eq:SeparableKernel}.
\end{remark}

\subsection{Learning Methods}

We consider two standard learning approaches within the kernel-based setting. Throughout this section, we assume that $ \cH $ is a RKHS with kernel $ K $.
Given data at a finite set of training points, we seek an approximation in a finite-dimensional hypothesis space induced by the kernel.

\begin{definition}
 For a fixed set of \emph{training points} $ \Xi := \{ \xxi_1, \dots, \xxi_M \} 
 \subseteq D $ define the 
 \emph{hypothesis space} or \emph{parametric model class} $ V_M \subseteq \cH $ as
 \begin{align*}
  V_M := \left\{ \sum_{i=1}^{M} K(\cdot, \xxi_i ) \ww_i
  \; : \;  \ww_i \in \R^r , 1 \leq i \leq M \right\}.
 \end{align*}
\end{definition}

If the data is exact, interpolation is the natural choice: 
it fits the data perfectly and, as we show below, minimizes 
the $\mathcal{H}$-norm among all interpolants. If the data is disturbed, as is the 
case for $A_{\mathrm{on}}$, which receives the inexact output 
$A_{\mathrm{off}}(S_X(u)) \approx \vv^\dagger$, 
interpolation is undesirable as it overfits the noise. In this 
case, regularized least-squares introduces a bias controlled by 
$\lambda > 0$ in exchange for improved stability, embodying the 
classical \emph{bias-variance tradeoff} familiar from statistical 
learning theory \citep{Steinwart2008}.

\subsubsection{Norm-minimal Interpolation}\label{subsubsec:Interpolation}

\begin{definition}
For given training points $ \Xi := \{ \xxi_1, \dots, \xxi_M \} 
 \subseteq D $ and corresponding labels $F := \{f_1, \dots f_M\} \subseteq \R^{r} $ the \emph{interpolation problem} is given as 
 \begin{center}
  Find $ s_0 \in V_M $ such that $ s_0(\xxi_i) = f_i $ for all 
  $ 1 \leq i \leq M $.
 \end{center}
 The solution of the interpolation problem is called the \emph{(kernel-)
interpolant} to $ F $ in $ V_M $.
\end{definition}

Kernel-interpolants in RKHSs have many 
advantageous properties that follow directly from the reproducing property
in \cref{def:RKHS}.

\begin{theorem}\label{thrm:PropertiesInterpolation}
Assume that the labels $F$ are generated by $f \in \mathcal{H}$, 
i.e., $f_i = f(\xxi_i)$, and let $s_0 \in V_M$ be the interpolant 
to $F$. 
Then $ s_F $ is the unique interpolant in $ V_M $ and the orthogonal 
projection of $ f $ onto $ V_M $. In particular, 
\begin{align*}
\|s_0\|_{\mathcal{H}} \leq \|f\|_{\mathcal{H}}
\end{align*}
holds.
\end{theorem}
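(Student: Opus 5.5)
The plan is to derive everything from the reproducing property in \cref{def:RKHS}. Note that the statement writes $s_F$ for what is called $s_0$; the two denote the same interpolant.

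\textbf{Step 1: uniqueness.} Suppose $s, s' \in V_M$ both interpolate $F$, and set $e := s - s' = \sum_j K(\cdot,\xxi_j)\ww_j \in V_M$. Then $e(\xxi_i) = 0$ for all $i$. Applying the reproducing property gives
\begin{align*}
\|e\|_{\cH}^2 = \sum_{j=1}^{M} \langle e, K(\cdot,\xxi_j)\ww_j\rangle_{\cH} = \sum_{j=1}^{M} \langle e(\xxi_j), \ww_j\rangle_2 = 0,
\end{align*}
so $e = 0$. This argument uses no definiteness of $K$ at all; it only uses that $e$ lies in $V_M$ and vanishes at the training points. Existence of the interpolant is part of the hypothesis, so only uniqueness needs proof.

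\textbf{Step 2: orthogonality.} I will show that $f - s_0 \perp V_M$. For any generator $K(\cdot,\xxi_i)\ww$ with $\ww \in \R^r$,
\begin{align*}
\langle f - s_0, K(\cdot,\xxi_i)\ww\rangle_{\cH} = \langle f(\xxi_i) - s_0(\xxi_i), \ww\rangle_2 = 0,
\end{align*}
since $s_0$ interpolates $f$ at every $\xxi_i$. By linearity the same holds for every element of $V_M$. Because $V_M$ is finite-dimensional, it is closed. Hence $s_0 \in V_M$ together with $f - s_0 \perp V_M$ is exactly the characterization of the orthogonal projection $P_{V_M} f$.

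\textbf{Step 3: norm bound.} The Pythagorean identity gives
\begin{align*}
\|f\|_{\cH}^2 = \|s_0\|_{\cH}^2 + \|f - s_0\|_{\cH}^2 \geq \|s_0\|_{\cH}^2,
\end{align*}
which is the claimed inequality.

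The same orthogonality argument also yields the norm-minimality mentioned in the text. Let $\tilde f \in \cH$ be any function that interpolates $F$. Then $\tilde f - s_0$ vanishes at the training points, so by the computation in Step 2 it is orthogonal to $V_M$, and therefore $\|\tilde f\|_{\cH} \geq \|s_0\|_{\cH}$.

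None of these steps is a real obstacle. The only care needed is in Step 1: the argument must avoid any appeal to positive definiteness of the kernel matrix, because the coefficients $\ww_j$ need not be unique when $K$ is only semidefinite. The reproducing-property computation sidesteps this, since it establishes uniqueness of the function $s_0$ rather than of its coefficients.
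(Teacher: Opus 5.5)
Your proof is correct. It is the standard reproducing-property argument the paper has in mind: the paper states the result without proof and remarks only that it follows directly from the reproducing property in \cref{def:RKHS}. As a small bonus, the computation in Step 2 run in reverse shows that $P_{V_M} f$ itself interpolates $F$, so existence of $s_0$ need not be assumed when the labels come from some $f \in \cH$.
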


The interpolant $ s_0 $ defines a linear \emph{interpolation operator} $ \cI_{\Xi}: \R^{rM} \to V_M $, $ \cI_{\Xi}([f]) = s_0 $, where $ [f] = [f_1, \dots, f_M]^{\transpose} \in \R^{rM}$. If the labels are generated by a function $ f \in C(D)^r $, this can be extended to an operator $ \cI_{\Xi}: C(D)^r \to V_M $ by setting $ \cI_{\Xi}(f) = \cI_{\Xi}([f]) $.

\subsubsection{Regularized Least-Squares Regression}\label{subsubsec:PLS}

If the data is not exact interpolation is typically undesirable, as it leads to overfitting. We try to learn the surrogate via another kernel method, \emph{regularized least-squares} or \emph{ridge regression}.

\begin{definition}\label{def:PLS}
 Let $ \lambda > 0 $ be a \emph{penalization parameter}. Let $ \Xi := \{ \xxi_1, \dots, \xxi_M \} \subseteq D $ be a set of training points and $ F = \{f_1, \dots, f_M \} \subseteq \R^{r} $ be the labels. Define the functional $ J_{\lambda} : \cH \to \R $ by 
 \begin{align}\label{eq:PLSFunctional}
  J_{\lambda}(s) := \sum_{j = 1}^{M} \| s(\xxi_j) - f_j \|_2^2 + \lambda \| s \|_{\cH}^2.
 \end{align}
 Then the \emph{regularized least-squares problem} is given as 
 
 \begin{center}
 Find $ s_{\lambda} \in \cH $ such that $ s_{\lambda} = \argmin_{s \in \cH} J_{\lambda}(s) $.
 \end{center}
\end{definition}

It turns out that, if $ \cH $ is a RKHS, the solution of the regularized least-squares problem, a minimization problem in an infinite dimensional space, is unique and an element of the finite dimensional space $ V_M $, see, e.g., \citet{Steinwart2008}.

\begin{theorem}\label{thrm:PLSRepresenterTheorem}
For any $ \lambda > 0 $, there is a unique minimizer of the regularized least-squares problem of \cref{def:PLS} in $ \cH $. 
 
In addition, there exists a coefficient vector $ \ww \in \R^{rM} $ such that 
 \begin{align*}
  s_{\lambda} = \sum_{i=1}^{M} K(\cdot, \xxi_i) \ww_i,
 \end{align*}
 i.e., $ s_{\lambda} \in V_M $.
\end{theorem}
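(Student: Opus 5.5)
The plan is the standard representer-theorem argument in three steps: existence and uniqueness via strong convexity, reduction to $V_M$ by an orthogonal decomposition, and a short closing remark on the coefficients.

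\textbf{Existence and uniqueness.} The functional $J_\lambda$ in \eqref{eq:PLSFunctional} is the sum of two parts. The first is the data term $s \mapsto \sum_j \|s(\xxi_j) - f_j\|_2^2$. Point evaluations are bounded linear maps on $\cH$ by the reproducing property of \cref{def:RKHS}, since $\langle s(\xxi), \ww\rangle_2 = \langle s, K(\cdot,\xxi)\ww\rangle_{\cH}$. Hence the data term is continuous and convex. The second part is the penalty $\lambda\|s\|_{\cH}^2$, which is continuous and $2\lambda$-strongly convex. So $J_\lambda$ is continuous, strongly convex and coercive on the Hilbert space $\cH$.

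Take a minimizing sequence. It is bounded by coercivity. Strong convexity, applied through the parallelogram identity to midpoints, shows the sequence is Cauchy. Its limit is a minimizer, and strict convexity gives uniqueness. Alternatively, one can invoke weak lower semicontinuity together with reflexivity.

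\textbf{Reduction to $V_M$.} Write $\cH = V_M \oplus V_M^\perp$; since $V_M$ is finite-dimensional, it is closed. Decompose $s = s_\parallel + s_\perp$. For $s_\perp \in V_M^\perp$, the reproducing property gives
\[
\langle s_\perp(\xxi_j),\ww\rangle_2 = \langle s_\perp, K(\cdot,\xxi_j)\ww\rangle_{\cH} = 0
\]
for all $\ww \in \R^r$ and all $j$, so $s_\perp(\xxi_j)=0$. The data term therefore depends only on $s_\parallel$. By Pythagoras, $\|s\|_{\cH}^2 = \|s_\parallel\|_{\cH}^2 + \|s_\perp\|_{\cH}^2$. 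It follows that $J_\lambda(s_\parallel) \le J_\lambda(s)$, with strict inequality unless $s_\perp = 0$.

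The unique minimizer therefore lies in $V_M$, and so $s_\lambda = \sum_i K(\cdot,\xxi_i)\ww_i$ for some $\ww \in \R^{rM}$.

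\textbf{Coefficients (optional).} Substituting this form gives the quadratic
\[
\|\KK\ww - [f]\|_2^2 + \lambda\,\ww^{\transpose}\KK\ww ,
\]
where $\KK$ is the block Gram matrix. The first-order condition is $\KK(\KK+\lambda\II)\ww = \KK[f]$, which is solved by $\ww = (\KK+\lambda\II)^{-1}[f]$. This step is not needed for the statement as posed, only for explicitness.

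\textbf{Main obstacle.} There is little real difficulty. The one point needing care is existence in infinite dimensions, which requires the strong-convexity or weak-compactness argument above rather than a finite-dimensional compactness argument. A second route avoids this entirely: perform the orthogonal reduction first, then minimize over the finite-dimensional $V_M$, where coercivity and continuity immediately give a minimizer.
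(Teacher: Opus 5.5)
Your proof is correct and follows the standard representer-theorem argument. The paper itself only cites \citet{Steinwart2008} for this result, but its proof of the physics-informed analogue, \cref{thrm:PIRepresenter}, uses the same steps: the orthogonal splitting $s = s^{\parallel} + s^{\perp}$, vanishing of $s^{\perp}$ at the data points via the reproducing property, Pythagoras, and then the normal equations. Your existence step via the midpoint/Cauchy estimate is, if anything, more careful than the paper's. The paper appeals only to strict convexity, which gives uniqueness but not existence.
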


The following estimate is important for the error analysis.

\begin{proposition}\label{prop:EstimatesPLS}
Let $s_{\lambda} \in V_M$ be the unique minimizer of the 
regularized least-squares problem and assume that the labels 
are generated by $f \in \mathcal{H}$, i.e., $f_i = f(\xxi_i)$, 
$1 \leq i \leq M$. Then
\begin{align*}
    \|s_{\lambda}\|_{\mathcal{H}} \leq \|f\|_{\mathcal{H}}.
\end{align*}
\end{proposition}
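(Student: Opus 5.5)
The argument compares the value of $J_\lambda$ at its minimizer with its value at $f$ itself, which is a competitor in $\cH$. Because the labels are exact, $f(\xxi_j) = f_j$ for all $1 \leq j \leq M$, so the data-fidelity term of $J_\lambda(f)$ vanishes and
\begin{align*}
J_\lambda(f) = \lambda \|f\|_{\cH}^2 .
\end{align*}

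By \cref{thrm:PLSRepresenterTheorem}, $s_\lambda$ is the unique minimizer of $J_\lambda$ over all of $\cH$, and in particular $J_\lambda(s_\lambda) \leq J_\lambda(f)$. The data-fidelity term of $J_\lambda(s_\lambda)$ is a sum of squared norms and hence nonnegative. Dropping it gives
\begin{align*}
\lambda \|s_\lambda\|_{\cH}^2 \leq J_\lambda(s_\lambda) \leq J_\lambda(f) = \lambda \|f\|_{\cH}^2 .
\end{align*}
Dividing by $\lambda > 0$ and taking square roots yields $\|s_\lambda\|_{\cH} \leq \|f\|_{\cH}$.

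There is no real obstacle here. The only point to check is that $f$ is an admissible competitor. This holds because $f \in \cH$ by assumption and the minimization in \cref{def:PLS} is over all of $\cH$, not just over $V_M$.

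Alternatively, one could project $f$ onto $V_M$ and invoke \cref{thrm:PropertiesInterpolation}, but the direct comparison above is shorter.
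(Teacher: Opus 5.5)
Your proof is correct and complete, and it matches the paper's approach. The paper states this proposition without a separate proof, but the comparison $\lambda\|s_\lambda\|_{\cH}^2 \leq J_\lambda(s_\lambda) \leq J_\lambda(f) = \lambda\|f\|_{\cH}^2$ is exactly the argument it uses inside the proofs of \cref{prop:EstimatePLSNorm} and \cref{prop:ErrorEstimatesInterpolationPLS}.
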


\begin{remark}[Computational form]
Both the interpolant and the regularized least-squares solution 
admit explicit representations in terms of the kernel matrix 
$\KK_{\Xi,\Xi} \in \R^{rM \times rM}$ with block 
entries $K(\xxi_i, \xxi_j) \in \mathbb{R}^{r \times r}$:
\begin{align*}
    s_0 &= \KK_{\Xi}(\cdot)\, \KK_{\Xi,\Xi}^{-1} \; [f], \\
    s_{\lambda} &= \KK_{\Xi}(\cdot) 
    \left(\KK_{\Xi,\Xi} + \lambda I\right)^{-1} [f],
\end{align*}
where $[f] = [f_1, \dots, f_M]^{\transpose} \in \mathbb{R}^{rM}$ is the vector containing the labels. 
In particular, $\mathcal{I}_\Xi$ and $\mathcal{Q}_{\lambda,\Xi}$ 
are both linear maps that can be evaluated via a single matrix-vector 
product once the respective system matrix has been factored off\-line.
\end{remark}

\begin{remark}
It is also possible to augment the functional 
$J_{\lambda}$ in \eqref{eq:PLSFunctional} by additional 
penalization terms. Of particular interest for our framework 
is a \emph{PDE residual penalty} $\mu\|\mathcal{L}s - f\|_{L_2}^2$ 
enforcing a soft PDE constraint on the reconstruction.
We revisit this extension in \cref{sec:PhysicsInformed}.
\end{remark}

Again, the solution of the regularized least-squares problem $ s_{\lambda} $ defines a linear operator $ \cQ_{\lambda,\Xi} : \R^{rM} \to V_M $ by $ \cQ_{\lambda, \Xi} ([f]) = s_{\lambda} $. If the labels are generated by a function $ f \in C(D)^r $, this can be extended to an operator $ \cQ_{\lambda,\Xi} : C(D)^r \to V_M $ by setting $ \cQ_{\lambda,\Xi}(f) = \cQ_{\lambda,\Xi}([f]) $.  

Indeed, the label generating function does not need to be in the RKHS. In the error analysis for kernel-based operator learning, we will make use of the following estimate.

\begin{proposition}\label{prop:EstimatePLSNorm}
 With the notation and assumptions of \cref{def:PLS}, let $ \cQ_{\lambda, \Xi}: \R^{rM} \to V_M $ be the regularized least-squares operator. Then the bound 
 \begin{align*}
  \| \cQ_{\lambda,\Xi}([f]) \|_{\cH} \leq \frac{1}{\sqrt{\lambda}} \| [f] \|_2
 \end{align*}
holds for all $ [f] \in \R^{rM} $.
\end{proposition}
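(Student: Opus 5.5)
The bound follows by comparing the minimizer $s_\lambda = \cQ_{\lambda,\Xi}([f])$ with the trivial competitor $s = 0 \in \cH$ in the functional $J_\lambda$ of \eqref{eq:PLSFunctional}. No smoothness of the label-generating function is needed. We only use that $s_\lambda$ exists and is the unique minimizer of $J_\lambda$ over $\cH$, which is guaranteed by \cref{thrm:PLSRepresenterTheorem} for every $\lambda > 0$ and every label vector $[f] \in \R^{rM}$.

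The key steps are as follows. First, by minimality, $J_\lambda(s_\lambda) \leq J_\lambda(0)$. Second, we evaluate the competitor:
\begin{align*}
J_\lambda(0) = \sum_{j=1}^M \|f_j\|_2^2 = \|[f]\|_2^2.
\end{align*}
Third, we drop the nonnegative data-fit term on the left-hand side:
\begin{align*}
\lambda \|s_\lambda\|_{\cH}^2 \leq \sum_{j=1}^M \|s_\lambda(\xxi_j) - f_j\|_2^2 + \lambda \|s_\lambda\|_{\cH}^2 = J_\lambda(s_\lambda) \leq \|[f]\|_2^2.
\end{align*}
Dividing by $\lambda > 0$ and taking square roots then yields $\|\cQ_{\lambda,\Xi}([f])\|_{\cH} \leq \lambda^{-1/2}\|[f]\|_2$.

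There is no real obstacle here; the only point needing care is that the argument does not use $f \in \cH$, in contrast to \cref{prop:EstimatesPLS}. As an optional cross-check, one can use the computational form $s_\lambda = \KK_\Xi(\cdot)(\KK_{\Xi,\Xi}+\lambda I)^{-1}[f]$. Its squared norm is $[f]^\transpose (\KK+\lambda I)^{-1}\KK(\KK+\lambda I)^{-1}[f]$. Diagonalizing the positive semidefinite matrix $\KK_{\Xi,\Xi}$ and using $\sigma/(\sigma+\lambda)^2 \leq 1/(4\lambda)$ gives the sharper constant $1/(2\sqrt{\lambda})$. The variational argument above already suffices for the stated bound.
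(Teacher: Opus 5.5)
Your proposal is correct and is exactly the paper's argument: use $J_\lambda(s_\lambda) \leq J_\lambda(0) = \|[f]\|_2^2$, drop the nonnegative data-fit term, and divide by $\lambda$. Your optional spectral cross-check, which gives the sharper constant $1/(2\sqrt{\lambda})$ via $\sigma/(\sigma+\lambda)^2 \leq 1/(4\lambda)$, is also correct but not needed.
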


\begin{proof}
 Since $ \cQ_{\lambda,\Xi}([f]) $ is the unique minimizer of $ J_{\lambda} $ we have
 \begin{align*}
  \lambda \| \cQ_{\lambda,\Xi}([f]) \|_{\cH}^2 \leq J_{\lambda}\left(\cQ_{\lambda,\Xi}([f]) \right) \leq J_{\lambda}(0) = \| [f] \|_2^2.
 \end{align*}
\end{proof}

In particular, \cref{prop:EstimatePLSNorm} means that 
\begin{align*}
\|\cQ_{\lambda,\Xi} \|_{\R^{rM} \to \cH} \leq \frac{1}{\sqrt{\lambda}}. 
\end{align*}
This bound is central to the error analysis of \cref{sec:ErrorAnalysis}: it shows that the reconstruction operator $\mathcal{Q}_{\lambda,\Xi}$ is bounded as a map from data space to $\mathcal{H}$, with stability constant $\frac{1}{\sqrt{\lambda}}$ that is independent of the point set $\Xi$ and the kernel $K$. This uniform bound is what allows us to control the propagation of the learning error of $A_{\mathrm{off}}$ through the reconstruction $A_{\mathrm{on}}$.

\subsection{Sampling Inequalities}

Sampling inequalities are the key tool connecting discrete 
training errors to continuous function space norms. Concretely, 
they bound a weak Sobolev norm $\|\cdot\|_{W^t_q}$ of a function 
by a combination of a strong norm $\|\cdot\|_{H^s}$ and a 
discrete norm at the training points, 
with the fill distance $h_{\Xi,D}$ controlling the relative 
weight of the two terms. In this sense, they play an analogous 
role to covering number or Rademacher complexity bounds in 
statistical learning theory \citep{Steinwart2008}: they quantify 
how well discrete information at a finite point set represents 
the continuous function.
The version we give here can be found in \citet[Theorem 2.13]{LeGia2025}. We use the notation $ (\cdot)_+ := \max(0,\cdot) $.

\begin{theorem}\label{thrm:SamplingInequality}
Let $D \subseteq \mathbb{R}^d$ be a bounded Lipschitz domain, 
$p, q \in [1,\infty]$, $s > d/2$, and $\gamma := \max(2,p,q)$. 
Then there exist constants $h_0, C > 0$ such that for all 
$\Xi \subseteq D$ with $h_{\Xi,D} < h_0$, all $f \in H^s(D)^r$, 
and all admissible $0 \leq t \leq \ell$,
\begin{align}\label{eq:SamplingInequality}
    \|f\|_{W^t_q(D)^r} \leq C\left(h_{\Xi,D}^{s-t-d \left(\frac{1}{2}-\frac{1}{q} \right)_+} 
    \|f\|_{H^s(D)^r} + h_{\Xi,D}^{\frac{d}{\gamma} - t} 
    \|f\|_{\ell_p(\Xi)^r}\right).
\end{align}
\end{theorem}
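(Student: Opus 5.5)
Since \cref{thrm:SamplingInequality} is quoted from \citet[Theorem 2.13]{LeGia2025}, the plan is to reproduce the standard proof in two stages. First, reduce to the scalar case $r=1$. Second, prove the scalar inequality by a local polynomial argument on a covering of $D$ by small star-shaped patches.

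\textbf{Reduction to scalar components.} All norms involved are componentwise. For $f=(f^{(1)},\dots,f^{(r)})$, the quantities $\|f\|_{W^t_q(D)^r}$, $\|f\|_{H^s(D)^r}$ and $\|f\|_{\ell_p(\Xi)^r}$ are equivalent, with constants depending only on $r,p,q$, to suitable $\ell$-sums over $j$ of the scalar norms of $f^{(j)}$. I will therefore apply the scalar inequality to each $f^{(j)}$ and sum. This only changes $C$.

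\textbf{Local estimate.} Since $D$ is a bounded Lipschitz domain, it satisfies an interior cone condition. Hence for $h$ small enough, $D$ can be covered by a family $\{T_k\}$ of star-shaped subdomains (cones or balls contained in $D$) with diameter comparable to $h=h_{\Xi,D}$, chordal radius uniformly bounded below relative to the diameter, and bounded overlap, in the manner of Narcowich--Ward--Wendland and Arcangeli et al.

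Set $L:=\lceil s\rceil-1$. On each $T_k$ I split $f=(f-\pi_k f)+\pi_k f$, where $\pi_k f$ is the averaged Taylor polynomial of degree $L$.
\begin{itemize}
\item The Bramble--Hilbert lemma combined with a scaled Sobolev embedding on $T_k$ gives
\[
\|f-\pi_k f\|_{W^t_q(T_k)}\leq C h^{s-t-d(\frac12-\frac1q)_+}|f|_{H^s(T_k)},
\]
and also $\|f-\pi_k f\|_{L_\infty(T_k)}\leq C h^{s-d/2}|f|_{H^s(T_k)}$.
\item The polynomial part is handled by a norming-set argument. Since $h_{\Xi,D}\le h_0$ is small relative to the patch size, $\Xi\cap T_k$ contains a norming set for $\pi_L(\R^d)$, so $\|P\|_{L_\infty(T_k)}\le 2\max_{\xxi\in\Xi\cap T_k}|P(\xxi)|$ for all polynomials $P$ of degree at most $L$. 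Markov's inequality then yields
\[
|\pi_k f|_{W^t_q(T_k)}\leq C h^{-t+d/q}\|\pi_k f\|_{L_\infty(T_k)}.
\]
\item Finally, at the points I write $\pi_k f(\xxi)=f(\xxi)-(f-\pi_k f)(\xxi)$. The second term is absorbed into the $H^s$ contribution via the $L_\infty$ bound in the first item.
\end{itemize}

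\textbf{Globalization.} I raise the local bounds to the appropriate power, sum over $k$ and use bounded overlap. For the $H^s$ part, summing $|f|_{H^s(T_k)}$ in $\ell_q$ against $\ell_2$ is exactly what produces the loss $d(\frac12-\frac1q)_+$. For the discrete part I need to pass from local $\max|f(\xxi)|$ to a global $\ell_p(\Xi)$ norm. I will use the inclusion $\ell_p\hookrightarrow\ell_\infty$ locally and Hölder across patches, with $\gamma=\max(2,p,q)$ as the common exponent. This converts $h^{d/q-t}$ into the stated $h^{d/\gamma-t}$ uniformly in $p,q$.

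I expect this bookkeeping to be the main obstacle. The number of points per patch is not controlled, because only the fill distance is assumed and there is no separation condition, so I must not count points. The plan is to bound each local maximum by the full local $\ell_p$ norm, and to exploit only the fact that the number of patches is $O(h^{-d})$ with bounded overlap. This is precisely why a $\gamma$-dependent exponent rather than $d/p$ appears. Finally, the restriction $t\le\ell$ (admissibility, i.e.\ $t\le L$ together with the embedding requirements) ensures that Markov's inequality and the Sobolev embedding are applicable.
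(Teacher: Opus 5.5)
The paper gives no proof of its own here; the statement is imported verbatim from \citet[Theorem 2.13]{LeGia2025}. Your outline is the standard local argument behind such results: Bramble--Hilbert on small star-shaped patches, a norming set for the polynomial part, and globalization via bounded overlap. Your treatment of the discrete term is sound. Bounding each local maximum by the local $\ell_p$ norm and applying H\"older over the $O(h^{-d})$ patches gives $h^{d/\max(p,q)-t}\|f\|_{\ell_p(\Xi)^r}$, which for $h<1$ implies the stated term (the $2$ in $\gamma$ is not needed there).

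Two steps fail as written, however.

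\emph{The positive part is in the wrong place.} The local estimate must read $\|f-\pi_kf\|_{W^t_q(T_k)}\le Ch^{s-t-d(\frac12-\frac1q)}|f|_{H^s(T_k)}$ \emph{without} $(\cdot)_+$. If you already write $(\cdot)_+$ locally, then for $q<2$ the passage from $\ell_2$ to $\ell_q$ over $O(h^{-d})$ patches costs an extra factor $h^{-d(\frac1q-\frac12)}$. For example, for $q=1$ you only get $h^{s-t-d/2}$ instead of $h^{s-t}$. The mechanism is the reverse of what you describe. For $q\ge2$ the loss $d(\frac12-\frac1q)$ comes from the local scaled embedding, and the summation is free because $\ell_2\hookrightarrow\ell_q$. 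For $q<2$ the summation exactly consumes the local gain $h^{d(\frac1q-\frac12)}$.

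\emph{Fractional $t$ is not covered.} This is the more substantial gap. The statement allows non-integer $t$ (only $q=\infty$ forces $t\in\N_0$). Your globalization step --- raise the local bounds to the $q$-th power, sum over $k$, use bounded overlap --- is valid only for integer $t$. For $t=\lfloor t\rfloor+\theta$ with $0<\theta<1$, the Slobodeckij seminorm is a double integral over $D\times D$. Hence $|f|^q_{W^t_q(D)^r}$ is not dominated by $\sum_k|f|^q_{W^t_q(T_k)^r}$: pairs $(x,y)$ that do not lie in a common patch are lost.

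You need an additional argument, as in Arcangeli, L\'opez de Silanes and Torrens. Split the double integral into two parts, with $\delta\sim h$ the patch scale:
\begin{itemize}
\item the near part $|x-y|<\delta$, handled on enlarged patches of the same scale that contain the $\delta$-neighbourhood of $T_k$ in $D$ and still have bounded overlap;
\item the far part $|x-y|\ge\delta$, bounded by $C\delta^{-\theta}|f|_{W^{\lfloor t\rfloor}_q(D)^r}$ and then estimated by the integer-order inequality.
\end{itemize}
With $\delta\sim h$ this recovers the factor $h^{-t}$. On the right-hand side, fractional $s$ is harmless, because $\sum_k|f|^2_{H^s(T_k)}\le M|f|^2_{H^s(D)}$ does hold.

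A minor point: admissibility of $t$ is dictated by the local embedding $H^s(T)\hookrightarrow W^t_q(T)$, not by $t\le L$. For $q=2$ and $s\in\N$ one may take $t\in(L,s]$. Markov's inequality imposes no restriction there, since derivatives of $\pi_kf$ of order larger than $L$ vanish.
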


\begin{remark}
The admissible range of $t$ depends on $s$, $q$, and $d$. 
Specifically, $\ell = \ell_0 := s - d\left(\frac{1}{2}-\frac{1}{q} \right)_+$ when 
$s \in \mathbb{N}$ and either $q > 2$ with $\ell_0 \in \mathbb{N}$, 
or $q = 2$. Otherwise $\ell = \lceil \ell_0 \rceil - 1$. 
For $q = \infty$, one additionally requires $t \in \mathbb{N}_0$.
\end{remark}

We obtain the following estimates for the two learning method introduced above.

\begin{proposition}\label{prop:ErrorEstimatesInterpolationPLS}
Let $ D \subseteq \R^d $ be a bounded Lipschitz domain and let $ s > d/2 $. 
Assume that the reproducing kernel Hilbert space $ \cH $ is norm-equivalent to $ H^s(D)^r $. 
Let $ \Xi \subseteq D $ be a set of training points with sufficiently small fill distance $ h_{\Xi,D} $.

Then the following error estimates hold for all admissible $ t $ and $ q $ as in \cref{thrm:SamplingInequality} and all $  f \in H^s(D)^r$:
\begin{enumerate}
 \item\label{item:ErrorEstimatesInterpolationPLSInterpolation} For interpolation,
 \begin{align*}
  \| f - \cI_{\Xi}(f) \|_{W^t_q(D)^r} \leq C h_{\Xi,D}^{s - t - d\left(\frac{1}{2}-\frac{1}{q} \right)_+} \| f \|_{H^s(D)^r}.
 \end{align*}
\item For regularized least-squares learning,
\begin{align*}
 \| f - \cQ_{\lambda,\Xi}(f) \|_{W^t_q(D)^r} \leq C \left( h_{\Xi,D}^{s - t - d\left(\frac{1}{2}-\frac{1}{q} \right)_+} + \sqrt{\lambda} h_{\Xi,D}^{\frac{d}{\gamma} - t} \right) \| f \|_{H^s(D)^r},
\end{align*}
where $ \gamma = \max(2,q) $.
\end{enumerate}
\end{proposition}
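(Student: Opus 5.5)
The plan is to apply the sampling inequality \cref{thrm:SamplingInequality} to the error function $e := f - \cI_{\Xi}(f)$, respectively $e := f - \cQ_{\lambda,\Xi}(f)$, which lies in $H^s(D)^r$ because $\cH$ is norm-equivalent to $H^s(D)^r$ and both $f$ and the reconstruction belong to $\cH$. We choose $p = 2$ in the discrete term, so that $\gamma = \max(2,2,q) = \max(2,q)$, as in the statement. The inequality then gives
\begin{align*}
\|e\|_{W^t_q(D)^r} \leq C\left( h_{\Xi,D}^{s-t-d\left(\frac12-\frac1q\right)_+}\|e\|_{H^s(D)^r} + h_{\Xi,D}^{\frac{d}{\gamma}-t}\|e\|_{\ell_2(\Xi)^r}\right).
\end{align*}
It remains to bound the two norms of $e$ by $\|f\|_{H^s(D)^r}$.

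For interpolation, the discrete term vanishes, since $e(\xxi_i) = 0$ for all $i$. By \cref{thrm:PropertiesInterpolation} we have $\|\cI_\Xi(f)\|_\cH \leq \|f\|_\cH$, hence $\|e\|_\cH \leq 2\|f\|_\cH$. Norm equivalence converts this into $\|e\|_{H^s(D)^r} \leq C\|f\|_{H^s(D)^r}$, which proves item 1.

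For regularized least squares, \cref{prop:EstimatesPLS} gives $\|\cQ_{\lambda,\Xi}(f)\|_\cH \leq \|f\|_\cH$, so the smooth term is handled exactly as in the interpolation case. The discrete term is the main step. Here I would use the minimizing property of $s_\lambda$ and compare with the competitor $s = f$:
\begin{align*}
\|e\|_{\ell_2(\Xi)^r}^2 \leq J_\lambda(s_\lambda) \leq J_\lambda(f) = \lambda\|f\|_\cH^2 .
\end{align*}
This yields $\|e\|_{\ell_2(\Xi)^r} \leq \sqrt{\lambda}\,\|f\|_\cH \leq C\sqrt{\lambda}\,\|f\|_{H^s(D)^r}$. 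Inserting both bounds into the sampling inequality gives item 2.

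Two points need care, though neither is a real obstacle. First, the sampling inequality must be applied to $e$ rather than to $f$, and $e \in H^s(D)^r$ requires the norm equivalence assumed in the statement. Second, the constant $C$ absorbs the norm-equivalence constants and the factor $2$, and the condition on $h_{\Xi,D}$ is exactly the $h_0$ threshold from \cref{thrm:SamplingInequality}.
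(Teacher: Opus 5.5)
Your proposal is correct and matches the paper's proof essentially step for step. The paper also applies the sampling inequality with $p=2$ to the error $f - \cI_\Xi(f)$, respectively $f - \cQ_{\lambda,\Xi}(f)$, and bounds the $H^s$ term by the triangle inequality, the bound $\|s\|_\cH \leq \|f\|_\cH$ for the reconstruction, and norm equivalence. For the discrete term it uses that the error vanishes on $\Xi$ in the interpolation case, and the comparison $J_\lambda(s_\lambda) \leq J_\lambda(f) = \lambda\|f\|_\cH^2$ in the least-squares case.
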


\begin{proof}
 The interpolation case follows immediately from \cref{thrm:SamplingInequality} and \cref{thrm:PropertiesInterpolation}, using the norm equivalence of $ \cH $ and $ H^s(D)^r $. 
 
To see the statement for the penalized least-squares case, we use \eqref{eq:SamplingInequality} with $ u = f - s_{\lambda} $ and $ p = 2 $. First, by \cref{prop:EstimatesPLS} and norm equivalence of $ \cH $ and $ H^s(D)^r $, we obtain
\begin{align*}
 \| f - s_{\lambda} \|_{H^s(D)^r} 
 \leq \| f \|_{H^s(D)^r} + \| s_{\lambda,F} \|_{H^s(D)^r}
 \leq C \| f \|_{H^s(D)^r}.
\end{align*}
and second, since $ s_{\lambda} $ is the unique minimizer of $ J_{\lambda} $
\begin{align*}
\| f - s_{\lambda} \|^2_{\ell_2(\Xi)^r}
&= \sum_{j=1}^{M} \| f(\xxi_j) - s_{\lambda}(\xxi_j) \|_2^2 \\
&\leq J_{\lambda}(s_{\lambda}) \leq J_{\lambda}(f) = \lambda \| f \|^2_{\cH}
\leq C \lambda \| f \|^2_{H^s(D)^r}.
\end{align*}
\end{proof}

In the error estimates of \cref{prop:ErrorEstimatesInterpolationPLS}, 
we assumed that the target function $f$ lies in  
$\cH$, i.e., $f \in H^s(D)^r$. In practice, this assumption 
may be too restrictive: the true operator output $\cG(u)$ 
needs not to have the same smoothness as the kernel used for 
reconstruction. This situation, where the target function has 
smoothness $b$ strictly less than the kernel smoothness $s$ is known as the 
\emph{mismatch case} or \emph{escaping the native space} 
\citep{Narcowich2006}. From a machine learning perspective, this 
corresponds to a mild form of model misspecification: the 
hypothesis space $V_M$ is richer than necessary for the target 
function, yet useful approximation rates can still be recovered, 
albeit at a reduced rate reflecting the true smoothness $b$ 
rather than the kernel smoothness $s$. To give the corresponding error estimate, we define the \emph{separation radius} $ q_{\Xi} $ of $ \Xi $ to be 
\begin{align*}
 q_{\Xi} := \frac{1}{2} \min_{i \neq j} \| \xxi_i - \xxi_j \|_2,
\end{align*}
and the \emph{mesh-ratio} $ \rho_{\Xi,D} $ of $ \Xi $ in $ D $ to be 
\begin{align*}
 \rho_{\Xi,D} := \frac{h_{\Xi,D}}{q_{\Xi}}.
\end{align*}
With this, we have the following error estimates \citep[Theorem 4.5 and Theorem 4.10]{LeGia2025}.

\begin{proposition}
 Let $ D \subseteq \R^d $ be a bounded Lipschitz domain and let $ s > d/2 $. 
Assume that the reproducing kernel Hilbert space $ \cH $ is norm-equivalent to $ H^s(D)^r $. 
Let $ \Xi \subseteq D $ be a set of training points with sufficiently small fill distance $ h_{\Xi,D} $, and assume that $ \Xi $ is quasi-uniform, i.e., $ \rho_{\Xi,D} \leq c $. Let $ f \in H^{b}(D)^r $ with $ d/2 < b < s $.

Then the following error estimates hold for all admissible $ t $ and $ q $ as in \cref{thrm:SamplingInequality}:
\begin{enumerate}
\item For interpolation
\begin{align*}
 \| f - \cI_{\Xi}(f) \|_{W^t_q(D)^r} \leq C h_{\Xi,D}^{b - t - d\left(\frac{1}{2}-\frac{1}{q} \right)_+} \rho_{\Xi,D}^{s - b} \| f \|_{H^{b}(D)^r},
\end{align*}
with a constant $ C > 0 $.
\item For regularized least-squares learning
\begin{align*}
 &\hspace{-2em}\| f - \cQ_{\lambda,\Xi}(f) \|_{W^t_q(D)^r} \leq \\
 &\leq C \left(  h_{\Xi,D}^{b - t - d\left(\frac{1}{2}-\frac{1}{q} \right)_+} \rho_{\Xi,D}^{s - b} + \sqrt{\lambda} h_{\Xi,D}^{b - s + \frac{d}{\gamma} - t} \rho_{\Xi,D}^{s-b} \right) \| f \|_{H^{b}(D)^r}.
\end{align*}
\end{enumerate}
\end{proposition}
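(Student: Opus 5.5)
The plan is to follow the structure of the proof of \cref{prop:ErrorEstimatesInterpolationPLS}, but to work in the weaker space $H^b(D)^r$. Direct use of \cref{thrm:SamplingInequality} with $s$ fails because $f$ need not lie in $\cH \cong H^s(D)^r$. I will therefore apply the sampling inequality \eqref{eq:SamplingInequality} with smoothness $b$ in place of $s$; this is allowed because $b > d/2$ and the constants depend only on $b$, $d$, $q$ and $D$. Applied to $e := f - \cI_\Xi(f)$ or $e := f - \cQ_{\lambda,\Xi}(f)$ with $p = 2$, it gives
\begin{align*}
\|e\|_{W^t_q(D)^r} \leq C\left( h_{\Xi,D}^{b - t - d\left(\frac12-\frac1q\right)_+} \|e\|_{H^b(D)^r} + h_{\Xi,D}^{\frac{d}{\gamma}-t}\|e\|_{\ell_2(\Xi)^r}\right).
\end{align*}
The proof then reduces to bounding $\|e\|_{H^b}$ and the discrete term $\|e\|_{\ell_2(\Xi)}$ by $\|f\|_{H^b}$.

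The key device is a band-limited or smooth quasi-interpolant $f_\sigma \in H^s(D)^r$ of $f$, in the spirit of Narcowich--Ward--Wendland. I will extend $f$ to $\R^d$ and choose the band-limit $\sigma \sim 1/q_\Xi$ so that $f_\sigma$ interpolates $f$ on $\Xi$. This needs quasi-uniformity. The construction should satisfy the Bernstein-type bounds $\|f_\sigma\|_{H^s} \leq C q_\Xi^{b-s}\|f\|_{H^b}$ and $\|f_\sigma\|_{H^b} \leq C\|f\|_{H^b}$.

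For interpolation, $\cI_\Xi(f) = \cI_\Xi(f_\sigma)$, so the discrete term vanishes. It remains to estimate $\|\cI_\Xi(f_\sigma)\|_{H^b} \leq C\|\cI_\Xi (f_\sigma)\|_{H^s}$, but this wastes the factor $q^{b-s}$. Instead, I will write $e = (f - f_\sigma) + (f_\sigma - \cI_\Xi f_\sigma)$. The first piece is bounded directly. For the second piece I will use \cref{prop:ErrorEstimatesInterpolationPLS} with smoothness $s$, which gives $h^{s-t-\cdots}\|f_\sigma\|_{H^s} \leq h^{s-t-\cdots} q^{b-s}\|f\|_{H^b} = h^{b-t-\cdots}\rho^{s-b}\|f\|_{H^b}$. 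This produces exactly the claimed interpolation rate.

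For least squares I will use the same splitting, $e = (f - f_\sigma) + (f_\sigma - \cQ_{\lambda,\Xi} f_\sigma)$, valid because $\cQ_{\lambda,\Xi}$ only sees data on $\Xi$. Part 2 of \cref{prop:ErrorEstimatesInterpolationPLS} applied to $f_\sigma$ yields $(h^{s-t-\cdots} + \sqrt{\lambda}h^{d/\gamma-t})\, q^{b-s}\|f\|_{H^b}$. Rewriting $q^{b-s} = h^{b-s}\rho^{s-b}$ gives both claimed terms.

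The main obstacle is the first piece, $\|f - f_\sigma\|_{W^t_q(D)}$. Since $f - f_\sigma$ vanishes on $\Xi$, I plan to bound it by the sampling inequality at smoothness $b$, using $\|f - f_\sigma\|_{H^b} \leq C\|f\|_{H^b}$, which gives $h^{b-t-\cdots}\|f\|_{H^b}$; this is dominated by the target since $\rho \geq 1$. Making this step rigorous requires the existence of the interpolating band-limited $f_\sigma$ with the stated norm bounds on a Lipschitz domain. That is precisely the content cited from \citet[Theorems 4.5, 4.10]{LeGia2025}, and I would invoke it rather than reprove it.
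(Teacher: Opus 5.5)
The paper gives no argument of its own for this proposition; it imports both estimates from \citet[Theorems 4.5 and 4.10]{LeGia2025}. You instead derive them from the matched case via the standard band-limited interpolation (``escaping the native space'') technique, and your derivation is sound. Because $f_\sigma|_\Xi = f|_\Xi$, both $\cI_\Xi$ and $\cQ_{\lambda,\Xi}$ return the same function for $f$ and for $f_\sigma$. The piece $f-f_\sigma$ has zero data, so \eqref{eq:SamplingInequality} at smoothness $b$ bounds it by $h_{\Xi,D}^{b-t-d(\frac12-\frac1q)_+}\|f\|_{H^b(D)^r}$. Finally, \cref{prop:ErrorEstimatesInterpolationPLS} applied to $f_\sigma$, together with $q_\Xi^{b-s} = h_{\Xi,D}^{b-s}\rho_{\Xi,D}^{s-b}$, yields both claimed terms exactly, including the extra factor $h_{\Xi,D}^{b-s}$ in the $\sqrt{\lambda}$-term. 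What your route buys over the citation is transparency. The mismatch estimates become a corollary of the matched estimates plus one approximation lemma, and one sees exactly where $\rho_{\Xi,D}^{s-b}$ comes from and why the regularization term deteriorates when $b<s$.

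A few bookkeeping corrections.

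\emph{Source of the lemma.} The lemma you need is the existence of $f_\sigma$ with band-limit $\sim 1/q_\Xi$, $f_\sigma|_\Xi = f|_\Xi$, $\|f-f_\sigma\|_{H^b}\le C\|f\|_{H^b}$, and the Bernstein bound $\|f_\sigma\|_{H^s}\le C q_\Xi^{b-s}\|f\|_{H^b}$. This is the band-limited interpolation result of Narcowich, Ward and Wendland underlying \citet{Narcowich2006}, applied componentwise to a Stein extension of $f$ to $\R^d$. It is not the content of the cited Theorems 4.5 and 4.10, which are the final estimates themselves, so invoking those at this step would be circular.

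\emph{Role of quasi-uniformity.} That lemma uses only the separation radius, not quasi-uniformity. The bound $\rho_{\Xi,D}\le c$ merely keeps $\rho_{\Xi,D}^{s-b}$ bounded. The inequality $\rho_{\Xi,D}\ge 1$, which you use to absorb the $f-f_\sigma$ contribution, holds automatically on a connected domain.

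\emph{Admissible $t$.} Since you apply the sampling inequality at smoothness $b$, ``admissible $t$'' must be understood relative to $b$ rather than $s$. This is the only meaningful reading anyway: for $q=2$ and $t>b$, the left-hand side need not be finite when $f\in H^b(D)^r$.
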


\begin{remark}
For a quasi-uniform point set $\Xi = \{ \xxi_1, \dots, \xxi_M\} $ in a 
bounded domain $D \subseteq \R^d$, i.e., 
$\rho_{\Xi,D} \leq c$, the fill distance 
satisfies $h_{\Xi,D} \sim M^{-1/d}$, where the implicit 
constants depend only on $D$ and the mesh-ratio bound $c$. 
This relation allows all fill distance conditions in this 
paper to be restated directly in terms of sample sizes.
\end{remark}

\section{Error Analysis and Budget Allocation for Kernel-based Operator Learning}\label{sec:ErrorAnalysis}

The purpose of the following analysis is to establish both convergence of the surrogate $ \cA $ and an explicit budget allocation condition quantifying how the three design parameters must be coupled. The resulting balance condition will provide an explicit resource-allocation rule for kernel-based operator learning.

We recall that 
\begin{align*}
\cA = A_{\mathrm{on}} \circ A_{\mathrm{off}} \circ S_X
\end{align*}
consists of three components:
$S_X$ discretizes the input function,
$A_{\mathrm{off}}$ learns a finite-dimensional map between 
    discretized inputs and outputs, and
$A_{\mathrm{on}}$ reconstructs an output function from 
    the predicted discretized observations.

\begin{assumption}\label{assum:ErrorAnalysis}
We make the following assumptions:
\begin{enumerate}
    \item Let $ \cU $ be a Banach space and $ \cM \subseteq \cU $ a compact model class.
    \item\label{assum:OutputSpace} The output space $\cV = 
    \cH_{K_\cV}$ is a reproducing kernel Hilbert space 
    norm-equivalent to $H^\sigma(\cD)^\ell$, $ \sigma > d/2 $.
    \item\label{assum:BottomSpace} The bottom RKHS $\cH_{K_b}$ 
    is norm-equivalent to $H^\alpha(B)^{m\ell}$, $ \alpha > np/2 $, where 
    $B = S_X(\cM) \subseteq \R^{np}$ is the image of the 
    model class under $S_X$. Let $ B $ be a bounded Lipschitz domain.
    \item\label{assum:ExistenceG} There exists a 
    function $g \in \cH_{K_b}$ satisfying 
    $g(S_X(u)) = S_Y(\cG(u))$ for all $u \in \cM$.
    \item\label{assum:OutputPoints} The output observation 
    points $Y \subseteq \cD$ have fill 
    distance $h_{Y,\cD}$.
    \item\label{assum:InputPoints} The discretized training 
    inputs $U = \{S_X(u_i)\}_{i=1}^N \subseteq B$ have fill 
    distance $h_{U,B}$.
\end{enumerate}
\end{assumption}

\cref{assum:ErrorAnalysis}(\ref{assum:ExistenceG}) formalizes the requirement 
that the operator $\cG$ can be represented consistently at 
the discretized level. In particular, it guarantees that the 
continuous operator learning problem induces a well-defined 
finite-dimensional learning problem.

The key idea underlying our error analysis, which distinguishes 
our approach from \citet{Batlle2024}, is the following: rather 
than tracing the error through the full composition 
$A_{\mathrm{on}} \circ A_{\mathrm{off}} \circ S_X$ directly, we interpret 
$\cA(u) = A_{\mathrm{on}}(A_{\mathrm{off}}(S_X(u)))$ as a \emph{reconstruction in 
$\cV$ from perturbed data}. Specifically, the exact discretized 
output $\vv^\dagger = S_Y(\cG(u)) \in \R^{m\ell}$ is unknown, 
and $A_{\mathrm{on}}$ instead receives the disturbed data 
$A_{\mathrm{off}}(S_X(u)) \approx \vv^\dagger$. This justifies the natural choice $A_{\mathrm{on}} = \cQ_{\lambda,Y}$. Since $\cQ_{\lambda,Y}$ is linear, we can insert the exact data and decompose the 
total error as
\begin{align}\label{eq:ErrorDecomposition}
    \|\cG(u) - \cA(u)\|_{W^\tau_q(\cD)^\ell} 
    \leq \underbrace{\|\cG(u) - 
    A_{\mathrm{on}}(\vv^\dagger)\|_{W^\tau_q(\cD)^\ell}}_{\text{Term } I} 
    + \underbrace{\|A_{\mathrm{on}}\|_{\R^{m\ell} \to \cV} 
    \cdot \|A_{\mathrm{off}}(S_X(u)) - 
    \vv^\dagger\|_2}_{\text{Term } II},
\end{align}
where Term $ I$ is the reconstruction error incurring when reconstructing $ \cG(u) $ from $ \vv^{\dagger}$. Its decay is governed by the output discretization density $ h_{Y,\cD} $ and  Term $ II $ is the learning error measuring the approximation error of the learned discretized operator. It is amplified by the reconstruction stability factor $\|A_{\mathrm{on}}\| \leq \lambda^{-1/2}$. This error is governed by the training discretization density $ h_{U,B} $.

The two terms are coupled only through the regularization 
parameter $\lambda$: smaller $\lambda$ reduces the 
reconstruction error in Term $I$ but amplifies the learning 
error in Term $II$ via the stability constant, and vice versa. 
This explicit coupling allows an optimal choice of $\lambda$, which we discuss after deriving general error estimates.

\subsection{Generating the Data by Interpolation}\label{subsec:GeneratingDataByInterpolation}

We first consider the noiseless setting in which the 
discretized operator is learned via kernel interpolation. 
This corresponds to deterministic training outputs generated 
without additional noise.

The following theorem quantifies how reconstruction accuracy 
and discretized learning accuracy interact through the 
regularization parameter $\lambda$.

\begin{theorem}\label{thrm:ErrorEstimateInterpolationData}
With the notation and assumptions of \cref{assum:ErrorAnalysis}, 
let $A_{\mathrm{off}} = \cI_U$ be the interpolation operator of 
\cref{subsubsec:Interpolation}. Then there exist constants 
$h_1, h_2, C_1, C_2 > 0$ such that for all $h_{Y,\cD} < h_1$, 
all $h_{U,B} < h_2$, all admissible $\tau$, $q$ as in 
\cref{thrm:SamplingInequality}, and all $u \in \cM$, the estimate
\begin{align*}
    \|\cG(u) - \cA(u)\|_{W^{\tau}_q(\cD)^{\ell}} \leq 
    C_1 \mathfrak{f}_1 \|\cG(u)\|_{H^{\sigma}(\cD)^{\ell}} + 
    C_2 \mathfrak{f}_2 \|g\|_{H^{\alpha}(B)^{m\ell}}
\end{align*}
holds, where
\begin{align*}
    \mathfrak{f}_1 := h_{Y,\cD}^{\sigma - \tau - 
    d\left(\frac{1}{2} - \frac{1}{q}\right)_+} + 
    \sqrt{\lambda}\, h_{Y,\cD}^{\frac{d}{\gamma} - \tau} 
    \quad \text{and} \quad 
    \mathfrak{f}_2 := \frac{1}{\sqrt{\lambda}}\, 
    h_{U,B}^{\alpha - \frac{1}{2}np},
\end{align*}
with $\gamma = \max(2,q)$.
\end{theorem}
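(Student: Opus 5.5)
The plan is to start from the error decomposition \eqref{eq:ErrorDecomposition} with $A_{\mathrm{on}} = \cQ_{\lambda,Y}$ and bound the two terms separately. Fix $u \in \cM$ and write $\vv^\dagger = S_Y(\cG(u))$. Linearity of $\cQ_{\lambda,Y}$ gives $\cA(u) = \cQ_{\lambda,Y}(\vv^\dagger) + \cQ_{\lambda,Y}(A_{\mathrm{off}}(S_X(u)) - \vv^\dagger)$, so the triangle inequality yields Term $I$ plus $\|\cQ_{\lambda,Y}(\cdot)\|_{W^\tau_q(\cD)^\ell}$ applied to the perturbation.

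\textbf{Term $I$.} Since $\cG(u) \in \cV \cong H^\sigma(\cD)^\ell$ and $\vv^\dagger$ are exact samples of $\cG(u)$ at $Y$, Term $I$ equals $\|\cG(u) - \cQ_{\lambda,Y}(\cG(u))\|_{W^\tau_q(\cD)^\ell}$. Part (2) of \cref{prop:ErrorEstimatesInterpolationPLS}, with $D=\cD$, $s=\sigma$, $\Xi = Y$, applies directly for $h_{Y,\cD} < h_1$ and gives $C_1 \mathfrak{f}_1 \|\cG(u)\|_{H^\sigma(\cD)^\ell}$.

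\textbf{Term $II$.} We need $\|\cQ_{\lambda,Y}\vv\|_{W^\tau_q(\cD)^\ell} \lesssim \lambda^{-1/2}\|\vv\|_2$. For admissible $\tau$ one has $\tau \le \sigma - d(\frac12-\frac1q)_+$, so Sobolev embedding on the bounded Lipschitz domain $\cD$ gives $H^\sigma(\cD)^\ell \hookrightarrow W^\tau_q(\cD)^\ell$. Combining this with the norm equivalence $\cV \cong H^\sigma$ and \cref{prop:EstimatePLSNorm} yields $\|\cQ_{\lambda,Y}\vv\|_{W^\tau_q} \le C\lambda^{-1/2}\|\vv\|_2$. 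It remains to bound $\|\cI_U(S_X(u)) - g(S_X(u))\|_2$, using $g(S_X(u)) = \vv^\dagger$ from \cref{assum:ErrorAnalysis}(\ref{assum:ExistenceG}). This is a pointwise error at a point of $B$, so we use the Euclidean norm on $\R^{m\ell}$, which is the $L_\infty$ error up to equivalence constants for the vector values. Apply part (1) of \cref{prop:ErrorEstimatesInterpolationPLS} on $B \subseteq \R^{np}$ with $s=\alpha$, $t=0$, $q=\infty$, $\Xi = U$, $r = m\ell$. The exponent becomes $\alpha - np/2$, which is admissible since $\alpha > np/2$. This gives $\sup_{\xx\in B}\|g(\xx) - \cI_U g(\xx)\|_2 \le C h_{U,B}^{\alpha - np/2}\|g\|_{H^\alpha(B)^{m\ell}}$ for $h_{U,B} < h_2$. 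Multiplying by $C\lambda^{-1/2}$ produces $C_2\mathfrak{f}_2\|g\|_{H^\alpha(B)^{m\ell}}$.

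\textbf{Main obstacle.} The delicate step is the passage from the $\cV$-norm stability bound to the $W^\tau_q$ norm, together with the uniformity of constants. The embedding constant depends only on $\cD$, $\sigma$, $\tau$, $q$, so that part is fine. In the $q=\infty$ sampling estimate on $B$, however, the constant $C$ and the threshold $h_2$ a priori depend on the output dimension $m\ell$ through the componentwise norms and the Euclidean-vs-max norm comparison. I would not try to remove this dependence; I would absorb it into $C_2$, matching the statement, which only asserts that some constants exist. A secondary point is that $B = S_X(\cM)$ must be a bounded Lipschitz domain for \cref{thrm:SamplingInequality} to apply. This is simply assumed in \cref{assum:ErrorAnalysis}(\ref{assum:BottomSpace}).
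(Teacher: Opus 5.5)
Your proposal is correct and follows the paper's proof. Both use the decomposition \eqref{eq:ErrorDecomposition}, part 2 of \cref{prop:ErrorEstimatesInterpolationPLS} for Term $I$, and \cref{prop:EstimatePLSNorm} together with part 1 of \cref{prop:ErrorEstimatesInterpolationPLS} on $B$ for Term $II$. The choice $t=0$, $q=\infty$ there is exactly what produces the exponent $\alpha - \frac{1}{2}np$.

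Your explicit embedding $H^\sigma(\cD)^\ell \hookrightarrow W^\tau_q(\cD)^\ell$, which turns the $\cV$-norm stability bound into a $W^\tau_q$ bound, is a step the paper leaves implicit in \eqref{eq:ErrorDecomposition}. Your concern about constants depending on $m\ell$ can be removed for a componentwise bottom kernel: sum the squared scalar sup-norm errors over the $m\ell$ components.
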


\begin{proof}
The error decomposition \eqref{eq:ErrorDecomposition} 
gives Term $I$ and Term $II$. Term $I$ is bounded by 
applying \cref{prop:ErrorEstimatesInterpolationPLS} 
to $\cQ_{\lambda,Y}$ with $f = \cG(u) \in \cV$. 
Term $II$ is bounded using \cref{prop:EstimatePLSNorm} 
for $\|A_{\mathrm{on}}\|$ and \cref{prop:ErrorEstimatesInterpolationPLS} 
applied to $\cI_U$ with $f = g \in \cH_{K_b}$. 
Combining the two bounds yields the result.
\end{proof}

\begin{remark}[Mismatch cases]\label{rem:MismatchInterpolation}
\cref{thrm:ErrorEstimateInterpolationData} assumes that 
$\cG(u) \in \cV = \cH_{K_\cV}$ and $g \in \cH_{K_b}$, 
i.e., both the target output function and the discretized 
operator have smoothness matching their respective native 
spaces. In practice, either or both of these assumptions 
may fail, leading to two possible mismatch scenarios. In 
both cases, we additionally assume that the respective 
point sets are quasi-uniform, i.e., 
$\rho_{Y,\cD} := h_{Y,\cD} / q_Y \leq c$ and 
$\rho_{U,B} := h_{U,B} / q_U \leq c$ for a constant 
$c > 0$, where $q_Y$ and $q_U$ denote the separation 
radii of $Y$ and $U$, respectively.

In the \emph{output mismatch case}, $\cG(u) \in 
H^{\mu}(\cD)^\ell$ with $d/2 < \mu < \sigma$. 
The factor $\mathfrak{f}_1$ is replaced by
\begin{align*}
\frakf_1^{\mathrm{miss}} := 
h_{Y,\cD}^{\mu - \tau - d(1/2-1/q)_+} 
\rho_{Y,\cD}^{\sigma - \mu},
\end{align*}
reflecting the reduced smoothness of the target output 
at the cost of an additional mesh-ratio factor 
$\rho_{Y,\cD}^{\sigma-\mu}$. Additionally, $ \|\cG(u)\|_{H^{\sigma}(\cD)^{\ell}} $ 
has to be replaced by $ \|\cG(u)\|_{H^{\mu}(\cD)^{\ell}}$

In the \emph{bottom mismatch case}, $g \in H^{\beta}(B)^{m\ell}$ 
with $np/2 < \beta < \alpha$. The factor $\frakf_2$ 
is replaced by
\begin{align*}
\frakf_2^{\mathrm{miss}} := 
h_{U,B}^{\beta - \frac{1}{2}np} \rho_{U,B}^{\alpha - \beta},
\end{align*}
together with the respective norm of $ g $.

Both mismatches can occur simultaneously. Then 
both replacements apply.
\end{remark}

The regularization parameter $\lambda$
controls an explicit bias-variance tradeoff:
smaller values improve reconstruction accuracy, but amplify 
errors propagated from the learned discretized operator. For readability, we only consider $ q = 2 $. Similar results can be obtained for all admissible $ q $. Choosing the regularization parameter such that 
\begin{align}\label{eq:ChoiceLambdaInterpolationData}
 \lambda^* = c_{\lambda} \left(\frac{h_{Y,\cD}^{\sigma - \tau}}{h_{Y,\cD}^{\frac{d}{2} - \tau}} \right)^2 = c_{\lambda} \left( h_{Y,\cD}^{\sigma - \frac{d}{2}} \right)^2,
\end{align}
with a constant $ c_{\lambda} > 0$, this yields that 
\begin{align*}
 \frakf_1 = c_{\lambda} h_{Y,\cD}^{\sigma - \tau} \quad \text{and} \quad \frakf_2 = c_{\lambda} \frac{h_{U, B}^{\alpha - \frac{1}{2} np}}{h_{Y,\cD}^{\sigma - \frac{d}{2}}}. 
\end{align*}
This choice of $ \lambda^* $ yields an explicit scaling law relating the output discretization complexity, the training sample complexity, and the regularity of the discretized operator.

\begin{corollary}[Budget Allocation Rule]\label{cor:BudgetAllocationInterpolation}
With the notation and assumptions of 
\cref{thrm:ErrorEstimateInterpolationData}, set $q = 2$, 
assume that $Y$ and $U$ are quasi-uniform with mesh-ratios 
$\rho_{Y,\cD} \leq c$ and $\rho_{U,B} \leq c$, so that 
$h_{Y,\cD} \sim m^{-1/d}$ and $h_{U,B} \sim N^{-1/(np)}$, 
and choose $\lambda = \lambda^* $ as in 
\eqref{eq:ChoiceLambdaInterpolationData}. Then the error 
estimate becomes
\begin{align}\label{eq:ErrorNandm}
    \|\cG(u) - \cA(u)\|_{H^\tau(\cD)^\ell} \leq 
    C_1\, m^{-\frac{\sigma - \tau}{d}} \|\cG(u)\|_{H^{\sigma}(\cD)^{\ell}} + 
    C_2\, N^{-\frac{2\alpha - np}{2np}}\, 
    m^{\frac{2\sigma - d}{2d}} \|g\|_{H^{\alpha}(B)^{m\ell}},
\end{align}
and the total error converges to zero as $m, N \to \infty$ 
provided that
\begin{align}\label{eq:BudgetAllocation}
    \frac{\log N}{\log m} \geq 
    \frac{np(2\sigma - d)}{d(2\alpha - np)}.
\end{align}
\end{corollary}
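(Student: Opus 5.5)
The plan is to specialize \cref{thrm:ErrorEstimateInterpolationData} to $q = 2$, insert $\lambda = \lambda^*$ from \eqref{eq:ChoiceLambdaInterpolationData}, and convert fill distances into sample sizes.

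\textbf{Step 1: specialize the error factors.} For $q = 2$ we have $d(\tfrac12-\tfrac1q)_+ = 0$ and $\gamma = 2$. Hence $\frakf_1 = h_{Y,\cD}^{\sigma-\tau} + \sqrt{\lambda}\, h_{Y,\cD}^{d/2-\tau}$. With $\sqrt{\lambda^*} = \sqrt{c_\lambda}\, h_{Y,\cD}^{\sigma-d/2}$, the second summand becomes $\sqrt{c_\lambda}\, h_{Y,\cD}^{\sigma-\tau}$, so $\frakf_1 \le (1+\sqrt{c_\lambda})\, h_{Y,\cD}^{\sigma-\tau}$. Likewise $\frakf_2 = c_\lambda^{-1/2}\, h_{U,B}^{\alpha-np/2}\, h_{Y,\cD}^{-(\sigma-d/2)}$. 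All $c_\lambda$-dependent factors are absorbed into $C_1$ and $C_2$; the exact power of $c_\lambda$ displayed in the text is irrelevant.

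\textbf{Step 2: pass to sample sizes.} By quasi-uniformity (the remark after the mismatch proposition), $h_{Y,\cD} \sim m^{-1/d}$ and $h_{U,B} \sim N^{-1/(np)}$, with constants depending only on $\cD$, $B$ and $c$. Substituting these relations:
\begin{itemize}
\item $h_{Y,\cD}^{\sigma-\tau} \lesssim m^{-(\sigma-\tau)/d}$;
\item $h_{U,B}^{\alpha-np/2} \lesssim N^{-(2\alpha-np)/(2np)}$;
\item $h_{Y,\cD}^{-(\sigma-d/2)} \lesssim m^{(2\sigma-d)/(2d)}$.
\end{itemize}
The third bound uses the lower relation $h_{Y,\cD} \gtrsim m^{-1/d}$, which is where quasi-uniformity of $Y$ is genuinely needed: it lets us control $h_{Y,\cD}$ from below, not only from above. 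Together these give \eqref{eq:ErrorNandm}. We also need $m$ and $N$ large enough that $h_{Y,\cD} < h_1$ and $h_{U,B} < h_2$, so that \cref{thrm:ErrorEstimateInterpolationData} applies.

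\textbf{Step 3: convergence.} The first term tends to $0$ because $\sigma > \tau$, which holds for admissible $\tau$. For the second term, write $N = m^{\kappa}$ with $\kappa = \log N / \log m$. The exponent of $m$ is then $-\kappa\,\frac{2\alpha-np}{2np} + \frac{2\sigma-d}{2d}$. This is nonpositive exactly when $\kappa \ge \frac{np(2\sigma-d)}{d(2\alpha-np)}$, using $2\alpha > np$.

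\textbf{Main obstacle.} At equality in \eqref{eq:BudgetAllocation} the second term is only bounded, not convergent. The careful argument therefore either requires strict inequality, or reads ``converges'' as: the second term is bounded and tends to zero whenever $\liminf \log N/\log m$ exceeds the threshold.

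There is a second subtlety. The factor $\|g\|_{H^\alpha(B)^{m\ell}}$ depends on $m$, since both $g$ and the target dimension change with $Y$. I would state explicitly, as an implicit assumption, that this norm stays uniformly bounded in $m$, and mention this in the proof.
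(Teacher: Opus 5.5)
Your proposal is correct and is the argument the paper sketches just before the corollary: insert $\lambda^*$ into $\frakf_1,\frakf_2$, convert fill distances to $m$ and $N$, and compare exponents of $m$. Both of your caveats are genuine defects of the statement. At equality in \eqref{eq:BudgetAllocation} the bound on the second term is only $O(\|g\|_{H^\alpha(B)^{m\ell}})$, so strict inequality is needed, and \cref{rem:ConvergenceRate} fails as stated. Separately, $\|g\|_{H^\alpha(B)^{m\ell}}^2$ is a sum of $m$ component norms, one per output point $\yy_j$, so it generically grows linearly in $m$ and your boundedness assumption will usually fail. Instead of assuming it bounded, you could use $\|g\|_{H^\alpha(B)^{m\ell}}\lesssim m^{1/2}$ (assuming uniformly bounded components), which gives the sufficient condition $\log N/\log m>\frac{2np\sigma}{d(2\alpha-np)}$.

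Two small corrections. First, the lower bound $h_{Y,\cD}\gtrsim m^{-1/d}$ holds for every $m$-point set by a volume-covering argument; quasi-uniformity is what supplies the upper bounds in your first two bullets, not the lower bound in the third. Second, for $q=2$ and $\sigma\in\N$ the endpoint $\tau=\sigma$ is admissible but gives no decay of the first term, so convergence requires $\tau<\sigma$.
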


The condition \eqref{eq:BudgetAllocation} expresses a 
\emph{budget allocation rule} between three free design parameters: 
 the number of training pairs $N$, the number of input observation points $n$ and the output resolution $m$.
It prescribes 
how $N$ must scale relative to $m$ and $n$ in order to prevent 
the learning error from dominating the reconstruction error. 
The interplay between these parameters is discussed further 
in \cref{subsec:ExistenceG}.

\begin{remark}[Unified Convergence rate]\label{rem:ConvergenceRate}
If $N = m^{\frac{np(2\sigma-d)}{d(2\alpha-np)}}$, i.e., 
\eqref{eq:BudgetAllocation} holds with equality, then 
both terms in \eqref{eq:ErrorNandm} decay at the same 
rate and the total error satisfies
\begin{align*}
    \|\cG(u) - \cA(u)\|_{H^\tau(\cD)^\ell} \leq 
    C\, m^{-\frac{\sigma - \tau}{d}} 
    \left(\|\cG(u)\|_{H^{\sigma}(\cD)^{\ell}} + \|g\|_{H^{\alpha}(B)^{m\ell}}\right).
\end{align*}
In other words, if $N$ grows sufficiently fast 
relative to $m$, the learning error of $A_{\mathrm{off}}$ does not 
pollute the overall convergence rate, and the surrogate 
$\cA$ converges at the same rate $m^{-(\sigma-\tau)/d}$ 
as the pure reconstruction error of $A_{\mathrm{on}}$ from exact data.
\end{remark}

\subsection{Generating the Data by Regularized Least-Squares}\label{subsec:GeneratingDataByRLS}

We now replace interpolation at the offline phase by 
regularized least-squares (RLS) approximation. This setting is 
particularly relevant in operator learning applications, 
where training outputs are often generated numerically and 
therefore contain discretization or solver error. In this 
case, exact interpolation may lead to instability or 
overfitting of the discretized operator.

The following theorem shows that we obtain a similar convergence estimate as in the 
interpolation setting, up to an additional bias term.

\begin{theorem}\label{thrm:ErrorEstimatePLSData}
With the notation and assumptions of \cref{assum:ErrorAnalysis}, 
let $A_{\mathrm{off}} = \cQ_{\mu,U}$ be the regularized least-squares 
operator of \cref{subsubsec:PLS} with regularization 
parameter $\mu > 0$. Then there exist constants 
$h_1, h_2, C_1, C_2 > 0$ such that for all $h_{Y,\cD} < h_1$, 
all $h_{U,B} < h_2$, all admissible $\tau$, $q$ as in 
\cref{thrm:SamplingInequality}, and all $u \in \cM$, the estimate
\begin{align*}
    \|\cG(u) - \cA(u)\|_{W^{\tau}_q(\cD)^{\ell}} \leq 
    C_1 \mathfrak{f}_1 \|\cG(u)\|_{H^{\sigma}(\cD)^{\ell}} + 
    C_2 \widetilde{\mathfrak{f}}_2 \|g\|_{H^{\alpha}(B)^{m\ell}}
\end{align*}
holds, where $ \mathfrak{f}_1 $ is as in \cref{thrm:ErrorEstimateInterpolationData} and $ \widetilde{\mathfrak{f}}_2 $ is given by
\begin{align*}
    \widetilde{\mathfrak{f}}_2 := \frac{1}{\sqrt{\lambda}} 
    \left(h_{U,B}^{\alpha - \frac{1}{2}np} + 
    \sqrt{\mu}\right).
\end{align*}
\end{theorem}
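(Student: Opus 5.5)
The argument follows the proof of \cref{thrm:ErrorEstimateInterpolationData} line by line; only the bound on the learning error changes. We start from the decomposition \eqref{eq:ErrorDecomposition} with $A_{\mathrm{on}} = \cQ_{\lambda,Y}$ and $\vv^\dagger = S_Y(\cG(u)) = g(S_X(u))$, the latter by \cref{assum:ErrorAnalysis}(\ref{assum:ExistenceG}). Term $I$ does not involve $A_{\mathrm{off}}$ at all. Applying \cref{prop:ErrorEstimatesInterpolationPLS} to $\cQ_{\lambda,Y}$ with $f = \cG(u) \in \cV \simeq H^\sigma(\cD)^\ell$ gives exactly $C_1 \frakf_1 \|\cG(u)\|_{H^\sigma(\cD)^\ell}$ for $h_{Y,\cD} < h_1$, as before. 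For Term $II$, \cref{prop:EstimatePLSNorm} gives $\|A_{\mathrm{on}}\|_{\R^{m\ell}\to\cV} \leq \lambda^{-1/2}$. It therefore remains to show
\begin{align*}
\|\cQ_{\mu,U}(g)(S_X(u)) - g(S_X(u))\|_2 \leq C\left(h_{U,B}^{\alpha-\frac{1}{2}np} + \sqrt{\mu}\right)\|g\|_{H^\alpha(B)^{m\ell}}
\end{align*}
uniformly in $u \in \cM$, i.e. a sup-norm bound on $B$ for $g - \cQ_{\mu,U}(g)$.

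For this pointwise bound I would use the second part of \cref{prop:ErrorEstimatesInterpolationPLS} on the domain $B \subseteq \R^{np}$, with $s = \alpha$, $r = m\ell$, $t = 0$ and $q = \infty$. Since $\|\cdot\|_2 \leq \sqrt{m\ell}\,\|\cdot\|_\infty$ on $\R^{m\ell}$, the $L_\infty(B)^{m\ell}$ norm controls the Euclidean norm of the error at every point $S_X(u) \in B$, up to a constant that is absorbed into $C_2$. With $q=\infty$ the exponent of the first term is $\alpha - \frac{np}{2}$, which matches $\frakf_2$. For the second term, $\gamma = \infty$ gives $h_{U,B}^{np/\gamma} = 1$, so it contributes $\sqrt{\mu}$. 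This produces $\widetilde{\frakf}_2$ exactly.

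The step that needs care is this last one. The sampling inequality, \cref{thrm:SamplingInequality}, is stated with $\gamma = \max(2,p,q)$, and for $q = \infty$ the factor $h^{d/\gamma - t}$ equals $1$ only if one reads $d/\infty = 0$. I would re-derive it directly to be safe. By the minimizing property, $\|g - \cQ_{\mu,U}(g)\|_{\ell_2(U)} \leq \sqrt{\mu}\,\|g\|_{\cH_{K_b}}$, and $\|\cdot\|_{\ell_\infty(U)} \leq \|\cdot\|_{\ell_2(U)}$. I would then apply the sampling inequality with $p = q = \infty$, so $\gamma = \infty$ and the discrete term carries the factor $h^0$. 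The $H^\alpha$ bound on the residual follows from \cref{prop:EstimatesPLS}. Together these give the $\sqrt{\mu}$ term with no negative power of $h_{U,B}$. The remaining bookkeeping is routine: take $h_2$ as the threshold $h_0$ of \cref{thrm:SamplingInequality} on $B$, and collect the norm-equivalence constants into $C_2$.
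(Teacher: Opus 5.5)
Your argument is correct and follows the route the paper intends. The paper gives no separate proof and defers to the proof of \cref{thrm:ErrorEstimateInterpolationData}: the decomposition \eqref{eq:ErrorDecomposition}, \cref{prop:ErrorEstimatesInterpolationPLS} for $\cQ_{\lambda,Y}$ in Term $I$, the stability bound $\lambda^{-1/2}$, and the RLS estimate on $B$ with $t=0$, $q=\infty$, where $\gamma=\infty$ turns $\sqrt{\mu}\,h_{U,B}^{np/\gamma}$ into $\sqrt{\mu}$. One minor point: you can avoid the factor $\sqrt{m\ell}$ from $\|\cdot\|_2\le\sqrt{m\ell}\,\|\cdot\|_\infty$ by applying the scalar sampling inequality componentwise and summing in $\ell_2$ (i.e.\ measuring $L_\infty(B)^{m\ell}$ with the Euclidean pointwise norm), which keeps $C_2$ independent of $m$, as \cref{cor:BudgetAllocationPLS} implicitly requires.
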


\begin{remark}[Mismatch cases]
The mismatch scenarios discussed in 
\cref{rem:MismatchInterpolation} extend directly to 
the this setting and yield analogous modified rates.
\end{remark}

The regularization parameter $\mu$ introduces a second bias-variance 
tradeoff at the bottom learning stage. Choosing $\mu$ too 
small reduces regularization bias but amplifies instability, 
whereas large $\mu$ improves stability at the cost of 
approximation accuracy. Choosing $\lambda$ as in 
\eqref{eq:ChoiceLambdaInterpolationData} and $\mu$ as
\begin{align}\label{eq:ChoiceMuPLSData}
    \mu^* := c_\mu \left(h_{U,B}^{\alpha - \frac{1}{2}np} 
    \right)^2,
\end{align}
with a free constant $c_\mu > 0$, exactly balances the 
fill-distance term $h_{U,B}^{\alpha - \frac{1}{2}np}$ 
and the regularization bias $\sqrt{\mu}$ in 
$\widetilde{\mathfrak{f}}_2$, yielding the following result. Remarkably, after optimal parameter balancing, the RLS and 
interpolation regimes yield identical asymptotic scaling 
laws.

\begin{corollary}\label{cor:BudgetAllocationPLS}
With the notation and assumptions of 
\cref{thrm:ErrorEstimatePLSData}, set $q = 2$, assume 
that $Y$ and $U$ are quasi-uniform with mesh-ratios 
$\rho_{Y,\cD} \leq c$ and $\rho_{U,B} \leq c$, and 
choose $\lambda = \lambda^*$ as in \eqref{eq:ChoiceLambdaInterpolationData} 
and $\mu = \mu^*$ as in \eqref{eq:ChoiceMuPLSData}. Then the 
error estimate and budget allocation condition 
\eqref{eq:ErrorNandm} and \eqref{eq:BudgetAllocation} of 
\cref{cor:BudgetAllocationInterpolation} hold verbatim.
\end{corollary}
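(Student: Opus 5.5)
The plan is to reduce \cref{cor:BudgetAllocationPLS} to the computation already carried out for \cref{cor:BudgetAllocationInterpolation}. I start from \cref{thrm:ErrorEstimatePLSData} with $q=2$, so that $\gamma=2$ and $d(\tfrac12-\tfrac1q)_+=0$. The first term $C_1\frakf_1\|\cG(u)\|_{H^\sigma(\cD)^\ell}$ is identical to the interpolation setting. Hence, with $\lambda=\lambda^*$ from \eqref{eq:ChoiceLambdaInterpolationData}, it becomes $C\,h_{Y,\cD}^{\sigma-\tau}\|\cG(u)\|_{H^\sigma(\cD)^\ell}$, since $\sqrt{\lambda^*}\,h_{Y,\cD}^{d/2-\tau}\sim h_{Y,\cD}^{\sigma-\tau}$. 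Quasi-uniformity of $Y$ gives $h_{Y,\cD}\sim m^{-1/d}$, which yields the factor $m^{-(\sigma-\tau)/d}$ exactly as in \eqref{eq:ErrorNandm}.

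For the second term, I insert $\mu=\mu^*$ from \eqref{eq:ChoiceMuPLSData}. Then $\sqrt{\mu^*}=\sqrt{c_\mu}\,h_{U,B}^{\alpha-\frac12 np}$, so
\[
\widetilde{\frakf}_2=\frac{1+\sqrt{c_\mu}}{\sqrt{\lambda^*}}\,h_{U,B}^{\alpha-\frac12 np},
\]
which is $(1+\sqrt{c_\mu})\,\frakf_2$. The factor $1+\sqrt{c_\mu}$ is absorbed into $C_2$. Using $\sqrt{\lambda^*}=\sqrt{c_\lambda}\,h_{Y,\cD}^{\sigma-d/2}$, $h_{Y,\cD}\sim m^{-1/d}$ and $h_{U,B}\sim N^{-1/(np)}$ (quasi-uniformity of $U$ in the bounded Lipschitz domain $B$), the second term becomes $C_2N^{-\frac{2\alpha-np}{2np}}m^{\frac{2\sigma-d}{2d}}\|g\|_{H^\alpha(B)^{m\ell}}$. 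This is \eqref{eq:ErrorNandm}.

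The budget condition follows from the same elementary argument as before. The first term tends to zero because $\sigma>\tau$ for admissible $\tau$. The second term is
\[
\exp\!\Big(-\tfrac{2\alpha-np}{2np}\log N+\tfrac{2\sigma-d}{2d}\log m\Big),
\]
and the exponent stays nonpositive, so the term is bounded by its coefficient, precisely when \eqref{eq:BudgetAllocation} holds. Strict decay needs strict inequality, or equality together with the balanced regime of \cref{rem:ConvergenceRate}. I would match whatever convention was used for \cref{cor:BudgetAllocationInterpolation}, since the claim is that the statement holds verbatim.

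The main obstacle is bookkeeping rather than analysis. First, one has to check that the hypotheses $h_{Y,\cD}<h_1$ and $h_{U,B}<h_2$ of \cref{thrm:ErrorEstimatePLSData} hold for $m,N$ large. This follows from the $\sim$ relations. Second, the constants $c_\lambda,c_\mu$ and the implicit constants in $h\sim M^{-1/\dim}$ must be independent of $m,N$. A subtle point is that $\|g\|_{H^\alpha(B)^{m\ell}}$ itself depends on $m$ and $n$, since $g$ changes with the discretization. As in \cref{cor:BudgetAllocationInterpolation}, I would treat it as a fixed factor in the estimate and not track it. Under that convention the argument is the same as for \cref{cor:BudgetAllocationInterpolation}.
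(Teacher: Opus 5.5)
Your proposal is correct and follows the paper's implicit argument: inserting $\mu^*$ gives $\widetilde{\frakf}_2=(1+\sqrt{c_\mu})\,\frakf_2$, the constant is absorbed into $C_2$, and everything else is inherited from \cref{cor:BudgetAllocationInterpolation}. One correction to your aside: in the equality case of \eqref{eq:BudgetAllocation} one gets $N^{-\frac{2\alpha-np}{2np}}m^{\frac{2\sigma-d}{2d}}=1$, so the learning term stays bounded but does not decay, and the ``balanced regime'' of \cref{rem:ConvergenceRate} does not rescue it; convergence actually requires $\frac{2\alpha-np}{2np}\log N-\frac{2\sigma-d}{2d}\log m\to\infty$, and this defect is inherited verbatim from the interpolation corollary rather than introduced by the RLS step.
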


\cref{cor:BudgetAllocationPLS} shows that, with the optimal 
choice of $\mu$, regularized least-squares at the bottom 
achieves the same asymptotic convergence rates and budget 
allocation condition as interpolation. Although RLS does not improve the asymptotic rate, it 
improves robustness in practice by better conditioning of the kernel system, particularly when training outputs contain numerical noise or the training set is large.

\begin{remark}[Comparison with \citet{Batlle2024}]
\label{rem:ComparisonBDHO}
The error results presented above are complementary 
to the main quantitative result of \citet{Batlle2024} 
(Theorem 3.3), reflecting the different goals of 
the two frameworks. We highlight three structural 
differences.

\emph{(i) Target norm.} 
We extend the range of norms the error is measured from $ H^{t'} $-norm results to more general $ W^{\tau}_q $-norm estimates. This includes in particular $ L_1 $- and $ L_{\infty} $-error results. 

\emph{(ii) Input discretization term.} We can drop the assumption that the input space $ \cU $ has RKHS structure and do not need a reconstruction $ A_{le} $. Moreover, Condition 3.2 of \citet{Batlle2024}, namely 
$u_i = A_{le} \circ S_X(u_i)$ for all training inputs is dropped. The input 
discretization error is therefore entirely absent in 
\cref{thrm:ErrorEstimateInterpolationData}.

\emph{(iii) Explicit vs implicit bottom error rates.} 
In \citet[Equation 3.8]{Batlle2024}, the learning error is 
expressed in terms of $\max_j \|f^\dagger_j\|_S$, the 
RKHS norm of the components of $f^\dagger$, which 
characterizes the smoothness of the target but does not 
directly give a rate in terms of the number of training 
pairs $N$. In \cref{thrm:ErrorEstimateInterpolationData}, 
the corresponding factor is $\mathfrak{f}_2 = 
\frac{1}{\sqrt{\lambda}} h_{U,B}^{\alpha - \frac{1}{2}np}$, 
which provides an explicit convergence rate in the fill 
distance $h_{U,B}$ of the training inputs and directly 
yields the budget allocation condition 
\eqref{eq:BudgetAllocation} of 
\cref{cor:BudgetAllocationInterpolation}.
\end{remark}

\subsection{Existence of the Discretized Operator}
\label{subsec:ExistenceG}

We expand the discussion of the existence of the discretized operator $g$ begun in \cref{subsec:FillDistanceAndExistenceOfG}, focusing here on the competing demands that existence and learnability of $g$ place on the discretization parameters $n$ and $m$.

A practically important special case is 
$\cM = B_R[0] \cap \cU_{n_0}$, where 
$\cU_{n_0} \subseteq \cU$ is a subspace of 
dimension $n_0$ and $B_R[0]$ denotes the closed ball 
of radius $R$ in $\cU$ about $ 0 $. Then $\cM$ is compact 
and $S_X: \cU_{n_0} \to \R^{np}$ is a linear 
map between finite-dimensional spaces. Whenever 
the point evaluations $\{\delta_{x_1}, \dots, 
\delta_{x_n}\}$ separate the elements of 
$\cU_{n_0}$, satisfied in particular 
when $np \geq n_0$, the map $S_X$ is injective 
on $\cU_{n_0}$, so $\iota_{\Phi} \equiv 0$ on 
$\cM$ and $g$ is well-defined. This setting 
closely resembles the framework of 
\citet{Batlle2024}, where training inputs satisfy 
the reconstruction condition $u_i = A_{le} \circ 
\phi(u_i)$, effectively restricting to a 
finite-dimensional subspace of the input RKHS. 
In practice, PCA preprocessing of the training 
inputs, as used in \cref{sec:Numerics},
realizes precisely this setting: $\cU_{n_0}$ is 
the span of the leading $n_0$ PCA components, 
and injectivity of $S_X$ on $\cU_{n_0}$ holds 
whenever the PCA basis functions are separated 
by the sampling points $X$.

\begin{remark}[Verification of \eqref{eq:AssumInequalityExistenceG}]
In this finite-dimensional setting, condition \eqref{eq:AssumInequalityExistenceG} is automatically satisfied. Since $ S_X: \cU_{n_0} \to \R^{np} $ is a linear injective map between finite-dimensional spaces, its left inverse $ S_X^{-1}: S_X(\cU_{n_0}) \to \cU_{n_0} $ is bounded. Hence, \eqref{eq:AssumInequalityExistenceG} holds with $ \iota_{\Phi} \equiv 0 $ and $ C = \| S_X^{-1} \| $.

In the PCA setting, $ C $ depends on the condition number of the matrix formed by the leading $ n_0 $ PCA basis functions at the sampling points $ X $, and is controlled by choosing $ X $ with sufficiently small fill distance relative to the spatial frequency of the basis functions.
\end{remark}

Furthermore, the existence and learnability of $g$ impose competing 
demands on $n$ and $m$. Increasing $n$ reduces 
information loss under discretization and improves 
identifiability on $\cM$: in the degenerate case 
$n = 0$, all inputs become indistinguishable. At the same time, the 
budget allocation condition \eqref{eq:BudgetAllocation} 
shows that the required number of training pairs 
grows as
\begin{align*}
N \gtrsim 
m^{\frac{np(2\sigma-d)}{d(2\alpha-np)}},
\end{align*}
so larger $n$ simultaneously increases the 
statistical complexity of learning $g$. The output 
parameter $m$ plays a different role: larger $m$ 
improves reconstruction accuracy of $A_{\mathrm{on}}$ through 
the decay of $h_{Y,\cD}$, but increases the output 
dimension $m\ell$ of $g$ and hence the complexity 
of the regression problem. Thus, $n$ and $m$ 
together control a fundamental 
representation-learnability tradeoff in the 
two-stage operator learning framework.

\section{Physics-Informed Reconstruction}
\label{sec:PhysicsInformed}

In many operator learning applications, $\cG$ maps 
a parameter or coefficient function $u$ to the 
solution $v$ of a PDE. Concretely, we 
assume that for each $u \in \cM$, the output 
$\cG(u) \in \cV$ is the unique solution of
\begin{align}\label{eq:ParametricPDE}
    \cL_u v = f(u) \quad \text{in } \cD,
\end{align}
where $\cL_u: \cV \to L_2(\cD)^\ell$ is a linear 
differential operator of order $\nu$ depending on 
$u$, and $f: \cM \to L_2(\cD)^\ell$ is a known 
forcing term. We emphasize that 
\eqref{eq:ParametricPDE} covers both the 
coefficient-to-solution map (e.g., 
$\cL_u = -\mathrm{div}(e^u \nabla\cdot)$, 
$f(u) = 1$) and the forcing-to-solution map 
(e.g., $\cL_u = -\Delta$, $f(u) = u$).

We wish the surrogate $\cA^{\mathrm{PDE}}$ to 
approximately satisfy \eqref{eq:ParametricPDE}, 
i.e., we want $\cL_u \cA^{\mathrm{PDE}}(u) 
\approx f(u)$ in $\cD$ for each $u \in \cM$. 
As recalled in \cref{sec:Introduction}, one 
classical approach encodes physical constraints 
directly into the kernel $K_\cV$. However, this 
requires the constraint to be identifiable from 
the analytical structure of $\cL_u$ and fails 
when $\cL_u$ depends on $u$. We therefore adopt 
a soft-constraint approach: we penalize the PDE 
residual at a finite set of \emph{collocation 
points} $Z = \{\zz_1, \dots, \zz_{m_\cL}\} 
\subseteq \cD$, requiring only the ability to 
evaluate $\cL_u$ and $f(u)$ pointwise for each 
$u \in \cM$.

\begin{assumption}\label{assum:PhysicsInformed}
We make the following assumptions:
\begin{enumerate}
    \item\label{assum:PI:order} The operator 
    $\cL_u$ is a linear differential operator of 
    order $\nu$ for each fixed $u \in \cM$, and 
    the native space $\cV = \cH_{K_\cV} = H^{\sigma}(\cD)^{\ell}$ satisfies 
    $\cV \hookrightarrow C^{2\nu}(\cD)$, which 
    holds whenever $\sigma > 2\nu + d/2$.
    \item\label{assum:PI:elliptic} $\cL_u$ satisfies a uniform a priori estimate over $\cM$, i.e., there 
    exists a constant $c_{\min} > 0$ such that 
    for all $u \in \cM$ and all $v \in \cV$,
    \begin{align*}
        \|v\|_{H^\nu(\cD)^\ell} \leq \frac{1}
        {c_{\min}}\left(
        \|\cL_u v\|_{L_2(\cD)^\ell} + 
        \|v\|_{L_2(\cD)^\ell}\right).
    \end{align*}
    \item\label{assum:PI:collocation} The PDE 
    collocation points $Z = \{\zz_1, \dots, 
    \zz_{m_\cL}\} \subseteq \cD$ have fill 
    distance $h_{Z,\cD}$.
\end{enumerate}
\end{assumption}

The linearity of $\cL_u$ is essential: it ensures 
that $v \mapsto \cL_u v(\zz_k)$ is a bounded 
linear functional on $\cV$, which is required 
for the representer theorem of 
\citet{Micchelli2005}. Particular examples of 
operators satisfying \cref{assum:PhysicsInformed} (2) are Darcy-type 
operators $\cL_u = -\mathrm{div}(a(u,\cdot)
\nabla\cdot)$ whenever $a(u,\xx) \geq a_{\min} 
> 0$ uniformly over $\cM$ and $\cD$, which holds 
for example when $u$ is bounded on $\cM$ and 
$a = e^u$. For second-order uniformly elliptic 
operators with suitable boundary conditions, the 
estimate follows from classical elliptic 
regularity theory \citep{Evans2010}.

\subsection{The Physics-Informed Reconstruction 
Operator and Representer Theorem}
\label{subsec:PIRepresenter}

We define the \emph{collocation sampling operator} 
$S_Z^{\cL_u}: \cV \to \R^{m_\cL \ell}$ by
\begin{align*}
    [S_Z^{\cL_u} v]_k := \cL_u v(\zz_k), 
    \quad k = 1, \dots, m_\cL,
\end{align*}
which is the natural analogue of $S_Y$ for 
    differential operator evaluations. For labels
$\vv \in \R^{m\ell}$ and $u \in \cM$, define 
the \emph{physics-informed Tikhonov functional} 
$J^{\mathrm{PDE}}_{\lambda,\mu,u}: \cV \to \R$ by
\begin{align}\label{eq:PhysicsInformedFunctional}
    J^{\mathrm{PDE}}_{\lambda,\mu,u}(s) := 
    \|S_Y s - \vv \|_2^2 + \lambda\|s\|_\cV^2 + 
    \frac{\mu}{m_\cL}
    \|S_Z^{\cL_u} s - [f(u)]\|_2^2,
\end{align}
where $[f(u)]:= S_Z(f(u)) \in \R^{m_\cL \ell}$ 
collects the values of the forcing term at the 
collocation points. The three terms in 
\eqref{eq:PhysicsInformedFunctional} penalize 
the data misfit at $Y$, the RKHS norm of $v$, 
and the PDE residual at $Z$, respectively.

\begin{definition}\label{def:PhysicsInformedRLS}
Let \cref{assum:ErrorAnalysis} and 
\cref{assum:PhysicsInformed} hold and let 
$\lambda, \mu > 0$. The \emph{physics-informed 
reconstruction operator} 
$A_{\mathrm{on}}^{\mathrm{PDE}}: \R^{m\ell} \times \cM 
\to \cV$ is defined by
\begin{align}\label{eq:PhysicsInformedOperator}
    A_{\mathrm{on}}^{\mathrm{PDE}}(\vv, u) := 
    \argmin_{s \in \cV}\, 
    J^{\mathrm{PDE}}_{\lambda,\mu,u}(s).
\end{align}
The corresponding \emph{physics-informed 
surrogate operator} $ \cA^{\mathrm{PDE}}: \cU \to \cV$ is
\begin{align*}
    \cA^{\mathrm{PDE}}(u) := 
    A_{\mathrm{on}}^{\mathrm{PDE}}(A_{\mathrm{off}}(S_X(u)), u), 
    \quad u \in \cM.
\end{align*}
\end{definition}

\begin{remark}
Note that $\cA^{\mathrm{PDE}}(u)$ is no longer 
of the form $A_{\mathrm{on}} \circ A_{\mathrm{off}} \circ S_X$ with a 
fixed $A_{\mathrm{on}}$, since $A_{\mathrm{on}}^{\mathrm{PDE}}(\cdot, u)$ 
depends on $u$ through $\cL_u$ and $f(u)$. 
However, for each fixed $u$ it is still a linear 
map from $\R^{m\ell}$ to $\cV$, and the error 
decomposition \eqref{eq:ErrorDecomposition} 
applies verbatim with $A_{\mathrm{on}}^{\mathrm{PDE}}(\cdot, 
u)$ in place of $A_{\mathrm{on}}$.
\end{remark}

Since both $S_Y$ and $S_Z^{\cL_u}$ consist of 
bounded linear functionals on $\cV$, the 
generalized representer theorem of 
\citet{Micchelli2005} applies to 
\eqref{eq:PhysicsInformedOperator}.

\begin{theorem}\label{thrm:PIRepresenter}
Let \cref{assum:ErrorAnalysis} and 
\cref{assum:PhysicsInformed} hold. Then for each 
$\vv \in \R^{m\ell}$ and $u \in \cM$ there 
exists a unique minimizer $A_{\mathrm{on}}^{\mathrm{PDE}}
(\vv, u)$ of \eqref{eq:PhysicsInformedOperator}, 
which lies in the \emph{collocation-augmented 
hypothesis space}
\begin{align*}
    V_{Y,Z}(u) := \spn\left(
    \left\{K_\cV(\cdot, \yy_j)\right\}_{j=1}^m 
    \cup 
    \left\{(\cL_{u}^{(2)} K_\cV)
    (\cdot, \zzeta)\big|_{\zzeta=\zz_k}
    \right\}_{k=1}^{m_\cL} \right),
\end{align*}
where $\cL_{u}^{(2)}$ denotes $\cL_u$ acting 
on the second argument of $K_\cV$. Writing
\begin{align*}
    A_{\mathrm{on}}^{\mathrm{PDE}}(\vv, u) = 
    \sum_{j=1}^m  
    K_\cV(\cdot, \yy_j) \aalpha_j + 
    \sum_{k=1}^{m_\cL} (
    (\cL_{u}^{(2)} K_\cV)(\cdot, \zzeta)
    \big|_{\zzeta=\zz_k})\bbeta_k ,
\end{align*}
the coefficients $(\aalpha, 
\bbeta) \in \R^{m\ell} \times 
\R^{m_\cL\ell}$ are the unique solution of the 
block linear system
\begin{align}\label{eq:PIBlockSystem}
    \begin{pmatrix}
        \KK_{YY} + \lambda \II & 
        \KK_{YZ}^{\cL} \\[4pt]
        (\KK_{YZ}^{\cL})^\transpose & 
        \KK_{ZZ}^{\cL\cL} + 
        \dfrac{\lambda m_\cL}{\mu}\II
    \end{pmatrix}
    \begin{pmatrix} \boldsymbol{\alpha} \\[4pt]
    \boldsymbol{\beta} \end{pmatrix}
    = \begin{pmatrix} \vv \\[4pt]
    \ff(u) \end{pmatrix},
\end{align}
where the kernel matrices are defined entry-wise 
by
\begin{align*}
    [\KK_{YY}]_{ij} &:= 
    K_\cV(\yy_i, \yy_j), \\
    [\KK_{YZ}^{\cL}]_{jk} &:= 
    (\cL_{u}^{(2)} K_\cV)(\yy_j, \zzeta)
    \big|_{\zzeta=\zz_k}, \\
    [\KK_{ZZ}^{\cL\cL}]_{kl} &:= 
    (\cL_{u}^{(1)} \cL_{u}^{(2)} K_\cV)
    (\xxi,\zzeta)
    \big|_{\zzeta=\zz_k,\,\xxi=\zz_l}.
\end{align*}
\end{theorem}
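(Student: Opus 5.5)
Since every ingredient of the functional $J^{\mathrm{PDE}}_{\lambda,\mu,u}$ is a bounded linear functional on $\cV$, I will treat \eqref{eq:PhysicsInformedOperator} as a strictly convex quadratic minimization in a Hilbert space, then follow the standard representer argument of \citet{Micchelli2005}.

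\textbf{Existence and uniqueness.} For fixed $u\in\cM$, \cref{assum:PhysicsInformed}(\ref{assum:PI:order}) gives $\cV\hookrightarrow C^{2\nu}(\cD)$. Hence both $s\mapsto s(\yy_j)$ and $s\mapsto \cL_u s(\zz_k)$, taken componentwise, are bounded linear functionals on $\cV$. The functional $J^{\mathrm{PDE}}_{\lambda,\mu,u}$ is therefore continuous and coercive, since it is bounded below by $\lambda\|s\|_\cV^2$. Because $\lambda>0$ it is also strictly convex: it is a sum of convex quadratics plus $\lambda\|\cdot\|_\cV^2$. A continuous, coercive, strictly convex functional on a Hilbert space has a unique minimizer, by weak lower semicontinuity together with boundedness of minimizing sequences.

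\textbf{Representer step.} By the reproducing property, $\langle s,K_\cV(\cdot,\yy_j)\ww\rangle_\cV=\langle s(\yy_j),\ww\rangle_2$. I will verify that $\langle s,(\cL_u^{(2)}K_\cV)(\cdot,\zzeta)|_{\zzeta=\zz_k}\ww\rangle_\cV=\langle \cL_u s(\zz_k),\ww\rangle_2$, i.e.\ that these functions are the Riesz representers of the collocation functionals. This is the step I expect to need the most care. It requires differentiating the reproducing identity under the inner product, using that difference quotients of $K_\cV(\cdot,\zzeta)\ww$ in $\zzeta$ converge in $\cV$. That convergence holds because $K_\cV$ is $C^{\nu}$ in each argument (indeed $C^{2\nu}$ jointly), by the embedding assumption together with the standard result that derivatives of kernel sections lie in the native space \citep{Wendland2004}.

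Then decompose $s=s_\parallel+s_\perp$ with $s_\parallel\in V_{Y,Z}(u)$ and $s_\perp\perp V_{Y,Z}(u)$. The component $s_\perp$ leaves all data and residual terms unchanged and only increases $\|s\|_\cV^2$. Hence the minimizer lies in $V_{Y,Z}(u)$.

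\textbf{Linear system.} Insert the ansatz $s=\sum_j K_\cV(\cdot,\yy_j)\aalpha_j+\sum_k(\cL_u^{(2)}K_\cV)(\cdot,\zz_k)\bbeta_k$. Write $G$ for the Gram matrix $\begin{pmatrix}\KK_{YY}&\KK^{\cL}_{YZ}\\(\KK^{\cL}_{YZ})^\transpose&\KK^{\cL\cL}_{ZZ}\end{pmatrix}$, which by the representer identities gives $S_Y s$, $S_Z^{\cL_u}s$ and $\|s\|_\cV^2=c^\transpose Gc$ for $c=(\aalpha,\bbeta)$.

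Setting the gradient of $J$ to zero, equivalently testing the first-order optimality condition against the generators, yields
\[
G\bigl(W(Gc-\mathbf r)+\lambda c\bigr)=0,
\qquad W=\mathrm{diag}\bigl(\II,\tfrac{\mu}{m_\cL}\II\bigr),
\]
where $\mathbf r=(\vv,\ff(u))$. A sufficient condition is $W(Gc-\mathbf r)+\lambda c=0$. Multiplying by $W^{-1}$ gives
\[
(G+\lambda W^{-1})\,c=\mathbf r,
\]
which is exactly \eqref{eq:PIBlockSystem}.

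Since $G\succeq 0$ and $\lambda W^{-1}\succ 0$, the system matrix is symmetric positive definite, so the coefficients are uniquely determined. Any other stationary coefficient vector differs from this one only by an element of $\ker G$, which represents the same function; uniqueness of the minimizer is thus consistent with the unique solution of the system.
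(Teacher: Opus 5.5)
Your proof is correct and follows essentially the same route as the paper: existence and uniqueness by convexity, the orthogonal splitting $s=s_\parallel+s_\perp$, and then the normal equations for $(\aalpha,\bbeta)$; the splitting works because the data and collocation functionals have their representers in $V_{Y,Z}(u)$, so $s_\perp$ does not affect those terms. You are more careful than the paper in two places where it is terse: you give the coercivity argument for existence, and you note that stationarity only gives $G\bigl(W(Gc-\mathbf r)+\lambda c\bigr)=0$, so the SPD block system is a sufficient condition whose solution represents the minimizer. The paper, for its part, identifies the collocation representers more cheaply via the Riesz theorem plus the reproducing property, which avoids your difference-quotient argument.
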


\begin{proof}
Let $ u \in \cM $ be fixed. Because of the strict convexity of $ \| \cdot \|_{\cV} $ there is a unique minimizer $ s^* $ of $ J^{\mathrm{PDE}}_{\lambda,\mu,u} $ in $ \cV $. 

To show that $ s^* \in V_{Y,Z}(u) $, we split $ s^* = s^{\parallel} + s^{\perp} $, where $ s^{\parallel} \in V_{Y,Z} $ and $ s^{\perp} \in V_{Y,Z}(u)^{\perp} $, the orthogonal complement of $ V_{Y,Z} $ in $ \cV $. Because $ V_{Y,Z}(u) $ is finite dimensional, this splitting is direct. We have, with the reproducing property of $ K_{\cV} $, that
\begin{align*}
s^{\perp}(\yy_i) = \langle s^{\perp}, K_{\cV}(\cdot, \yy_i) \rangle_{\cV} = 0, \quad \yy_i \in Y
\end{align*}
and
\begin{align*}
\| s^* \|_{\cV}^2 = \| s^{\parallel} \|_{\cV}^2 + \| s^{\perp} \|_{\cV}^2 + 2 \langle s^{\parallel}, s^{\perp} \rangle_{\cV} = \| s^{\parallel} \|_{\cV}^2 + \| s^{\perp} \|_{\cV}^2.
\end{align*}
Furthermore, by \cref{assum:PhysicsInformed}, the mapping $ h \mapsto (\cL_u h)(\zz_k) $, $ h \in \cV $, is bounded and linear. This means that there is a $ r_k \in \cV $ such that 
\begin{align*}
(\cL_u h)(\zz_k) = \langle h, r_k \rangle_{\cV}
\end{align*}
by the Riesz representation theorem. Using the reproducing property of $ K_{\cV} $, we obtain that $ r_k = \cL_u^{(2)}K_{\cV}(\cdot, \zzeta)|_{\zzeta = \zz_k} \in V_{Y,Z}(u)$. This also yields that $ \langle s^{\perp}, r_k \rangle = (\cL_u s^{\perp})(\zz_k) = 0$ for all $ \zz_k \in Z $. 

Together, we see that 
\begin{align*}
J^{\mathrm{PDE}}_{\lambda,\mu,u}(s^*) &= \| S_Y(s^*) - \vv \|_2^2 + \lambda \| s^* \|_{\cV}^2 + \frac{\mu}{m_{\cL}} \| S_Z^{\cL_u}(s^*) - [f(u)] \|_2^2 \\
&= \sum_{j=1}^{m} \| \vv_j - s^{\parallel}(\yy_j) \|_2^2 + \lambda \| s^{\parallel}\|_{\cV}^2 + \lambda \| s^{\perp} \|_{\cV}^2 + \frac{\mu}{m_{\cL}} \| S_Z^{\cL_u} ( s^{\parallel}) - [f(u)] \|_2^2.
\end{align*}
However, since $ s^* $ is the unique minimizer of $ J^{\mathrm{PDE}}_{\lambda,\mu,u} $, this means that $ \| s^{\perp} \|_{\cV}^2 = 0 $, i.e., $ s^{\perp} = 0 $. In turn, this yields $ s^* \in V_{Y,Z}(u) $. Hence, there are coefficient vectors $ \aalpha \in \R^{m\ell} $ and $ \bbeta \in \R^{m_{\cL}\ell} $ such that 
\begin{align*}
s^* = \sum_{j=1}^{m} K_{\cV}(\cdot, \yy_j) \aalpha_j + \sum_{k=1}^{m_{\cL}} (\cL_u^{(2)} K_{\cV}(\cdot, \zzeta)|_{\zzeta = \zz_k}) \bbeta_k = (\KK_Y(\cdot), \KK_{Z}^{\cL}(\cdot)) \begin{pmatrix} \aalpha \\ \bbeta \end{pmatrix}.
\end{align*}

Inserting the ansatz for $ s^* $ into $ J_{\lambda,\mu,u}^{\mathrm{PDE}} $ yields a finite-dimensional quadratic functional in $ (\aalpha, \bbeta) \in \R^{m \ell} \times \R^{m_{\cL} \ell}$. Setting the gradient to zero yields \eqref{eq:PIBlockSystem}.
\end{proof}

\begin{remark}[Smoothness requirement]
\label{rem:SmoothnessDoubling}
The kernel matrices in \eqref{eq:PIBlockSystem} 
reveal why we require 
$\sigma > 2\nu + d/2$ rather than $\sigma > \nu 
+ d/2$. The matrix $\KK_{ZZ}^{\cL\cL}$ involves 
applying $\cL_u$ to both arguments of $K_\cV$ 
simultaneously, requiring $2\nu$ derivatives of 
the kernel to exist and be bounded. This is the 
standard smoothness-doubling phenomenon in 
symmetric kernel collocation \citep{Schaback2009, 
Chen2021}. For Mat\'ern kernels, 
$\sigma > 2\nu + d/2$ is satisfied for example 
by choosing $\sigma = 2\nu + d/2 + 1/2$; for 
second-order operators ($\nu = 2$) in two 
dimensions ($d = 2$), this requires $\sigma > 5$.
\end{remark}

\begin{remark}[Connection to kernel collocation]
\label{rem:KernelCollocation}
In the limit $\mu \to \infty$, the diagonal 
regularization $\frac{\lambda m_\cL}{\mu}\II$ 
in the lower-right block of 
\eqref{eq:PIBlockSystem} tends to zero, and the 
system enforces $S_Z^{\cL_u} v = \ff(u)$ 
exactly, recovering symmetric kernel collocation 
\citep{Schaback2009} augmented with data fit at 
$Y$. In the limit $\mu \to 0$, the off-diagonal 
blocks $\KK_{YZ}^\cL$ and $(\KK_{YZ}^\cL)^\transpose$ 
become negligible relative to the diagonal 
regularization, and the system reduces to the 
standard Tikhonov system of 
\cref{subsubsec:PLS} with $\boldsymbol{\beta} 
= 0$. The parameter $\mu$ therefore continuously 
interpolates between pure data fitting 
($\mu = 0$) and exact PDE enforcement 
($\mu \to \infty$).
\end{remark}

\subsection{Computational Cost}
\label{subsec:PIDiscussion}

We now briefly discuss the computational cost of the physics informed operator learning method. The costs for the non-physics informed method can be recovered by setting $ m_{\cL} = 0 $.

The dominant online cost of $A_{\mathrm{on}}^{\mathrm{PDE}}$ 
is assembling and solving the block system 
\eqref{eq:PIBlockSystem} of size 
$(m + m_\cL) \times (m + m_\cL)$. The matrix 
$\KK_{YY}$ is the standard output kernel matrix 
and is independent of $u$. It is assembled and 
factored once per output grid $Y$. The matrices 
$\KK_{YZ}^\cL$ and $\KK_{ZZ}^{\cL\cL}$ require 
evaluating derivatives of $K_\cV$ under $\cL_u$: 
for a second-order operator this means second 
derivatives of $K_\cV$, available analytically 
for Mat\'ern kernels or via automatic 
differentiation. The assembly of 
$\KK_{ZZ}^{\cL\cL}$ costs $O(m_\cL^2)$ kernel 
evaluations and is the dominant assembly 
cost when $\cL_u$ depends on $u$. The Cholesky 
factorization of the full block system costs 
$O((m + m_\cL)^3)$, the same order as the 
standard Tikhonov solve but with a larger 
constant.

When $\cL_u = \cL$ is independent of $u$ 
(e.g., the forcing-to-solution map 
$\cL_u = -\Delta$, $f(u) = u$), all three 
kernel matrices are independent of $u$ and can 
be assembled and factored once offline. The 
online cost per test input then reduces to a 
single triangular solve of size $(m + m_\cL)$ 
plus one evaluation of $f(u)$ at $Z$. When 
$\cL_u$ depends on $u$ (e.g., the 
coefficient-to-solution map 
$\cL_u = -\mathrm{div}(e^u \nabla\cdot)$), 
the matrices $\KK_{YZ}^\cL$ and 
$\KK_{ZZ}^{\cL\cL}$ must be recomputed for 
each test input $u$, and the full 
$O((m + m_\cL)^3)$ solve is required online.

A rigorous error analysis of 
$\cA^{\mathrm{PDE}}$, including the optimal 
choice of $\mu$ and the resulting convergence 
rate, is left for future work.

\section{Numerical Experiments}\label{sec:Numerics}
We validate \cref{sec:ErrorAnalysis} on the Darcy flow equation and
test the physics-informed reconstruction of \cref{sec:PhysicsInformed}
on the Poisson equation.

\paragraph{Shared setup.}
Inputs $u$ are drawn from a Gaussian random field obtained by applying a
Gaussian filter with bandwidth $\sigma_{\mathrm{input}}=8$ to white noise and
clipping to $[-4,4]$, giving $C^\infty$-like realizations.
Solutions are computed by a finite difference scheme on a uniform
$64\times 64$ interior grid with zero Dirichlet boundary conditions.
We generate $N_{\mathrm{total}}=60000$ input-output pairs, split into
$N_{\mathrm{train}}=50000$ training and $N_{\mathrm{test}}=10000$ test pairs.
Input functions are reduced via PCA, retaining $99\%$ of variance. This is achieved with $n_{\mathrm{PCA}}=28$ components.
Output observation points $Y$ form a uniform $\sqrt{m}\times\sqrt{m}$
grid of interior points on $(0,1)^2$.

For the PI experiments, error averages are computed over a subset of
$500$ test samples due to the additional per-sample cost of solving
the augmented system.

\subsection{Kernel Operator Learning: Darcy Flow}\label{sec:NumDarcy}

We consider the parametric Darcy flow problem
\begin{align*}
  -\mathrm{div}(e^{u(x)}\nabla v(x)) = 1
  \quad\text{on }(0,1)^2,\qquad
  v = 0\text{ on }\partial(0,1)^2,
\end{align*}
where $u$ is the log-permeability and $\cG\colon u\mapsto v$ the
parameter-to-solution map.
Both the output kernel $K_\cV$ and the bottom kernel $K_b$ are matrix valued kernels with the Matérn-$\nicefrac{3}{2}$ function on the diagonal.
The native space of Matérn-$\nicefrac{3}{2}$ on $\R^2$ is $H^2(\R^2)$,
giving $\sigma=2$.
The regularization parameter is $\lambda(m)=m^{-5/2}$, which lies
well within the flat plateau of the error curve identified in
\cref{fig:LambdaSensitivity}.

\paragraph{Convergence in $m$.}

\begin{figure}[ht]
\centering
\begin{tikzpicture}
\begin{loglogaxis}[
    width=0.85\textwidth,
    height=0.45\textwidth,
    xlabel={Output observation points $m$},
    ylabel={Mean relative $L_2$ error},
    xmin=7, xmax=1100,
    ymin=3e-3, ymax=0.8,
    xtick={9,25,49,121,225,441,784},
    xticklabels={$9$,$25$,$49$,$121$,$225$,$441$,$784$},
    legend style={
        at={(0.5,-0.28)}, anchor=north,
        legend columns=-1, font=\scriptsize,
        column sep=8pt,
    },
    grid=major,
    grid style={gray!20},
    tick align=outside,
]
\addplot[blue, thick, mark=*, mark size=2pt]
coordinates {
    (9,   3.524e-01)
    (25,  1.411e-01)
    (49,  8.203e-02)
    (121, 3.678e-02)
    (225, 2.057e-02)
    (441, 1.039e-02)
    (784, 5.393e-03)
};
\addlegendentry{Oracle $A_{\mathrm{on}}$}

\addplot[orange!80!black, thick, dashed, mark=square*, mark size=2pt]
coordinates {
    (9,   3.524e-01)
    (25,  1.411e-01)
    (49,  8.204e-02)
    (121, 3.679e-02)
    (225, 2.059e-02)
    (441, 1.043e-02)
    (784, 5.462e-03)
};
\addlegendentry{Full surrogate $\mathcal{A}$}

\addplot[black, thin, dotted, no marks]
coordinates {
    (9,   4.503e-01)
    (25,  1.621e-01)
    (49,  8.270e-02)
    (121, 3.349e-02)
    (225, 1.801e-02)
    (441, 9.189e-03)
    (784, 5.169e-03)
};
\addlegendentry{Ref.\ slope $-1$}

\end{loglogaxis}
\end{tikzpicture}
\caption{Convergence of the oracle and full surrogate error as a function of
output resolution~$m$ with $N = 50{,}000$ fixed.
The oracle error transitions from a pre-asymptotic regime (local slope
$\approx -0.87$ for $m\leq 121$) toward the theoretical rate $m^{-1}$
for Matérn-$\tfrac{3}{2}$ ($\sigma=2$, $d=2$), with local slope
$-1.03$ for $m\geq 121$.
The full surrogate tracks the oracle throughout.}
\label{fig:Convergence}
\end{figure}
\cref{fig:Convergence} shows the mean relative $L_2$ error as a
function of $m$ with $N=N_{\mathrm{train}}$ fixed.
The oracle error exhibits a clear pre-asymptotic regime for small $m$,
with local slopes around $-0.87$ for $m\leq 121$, steepening toward
the theoretical prediction $m^{-\sigma/d}=m^{-1}$ of
\cref{thrm:ErrorEstimateInterpolationData} ($\sigma=2$, $d=2$) as $m$
increases. The local slope over $m\in\{121,\ldots,784\}$ is $-1.03$,
consistent with the asymptotic rate.
The full surrogate error tracks the oracle throughout the entire range,
confirming that the bottom-level learning error is negligible when $N$
is large.

\paragraph{Budget allocation.}
\cref{fig:BudgetAllocation} shows the full surrogate
error for $N = m^\kappa$ with varying $\kappa$, using
$n_\mathrm{PCA} = 20$.
For $\kappa = 0.5$ the error stagnates, indicating
that the training set is too small to keep pace with
the growing output resolution.
For $\kappa \geq 1.0$ the curves converge at a rate
comparable to the oracle, and the three curves with
$\kappa \in \{1.0, 1.5, 2.0\}$ are essentially
indistinguishable from one another.
This suggests that a moderate superlinear growth
$N \sim m^\kappa$ with $\kappa$ somewhere between
$0.5$ and $1.0$ is sufficient to avoid stagnation,
and that investing further in training data beyond
that threshold yields no measurable benefit.

\begin{figure}[ht]
\centering
\begin{tikzpicture}
\begin{loglogaxis}[
    width=0.85\textwidth,
    height=0.45\textwidth,
    xlabel={Output observation points $m$},
    ylabel={Mean relative $L_2$ error},
    xmin=7, xmax=1100,
    ymin=2e-3, ymax=0.8,
    xtick={9,25,49,121,225,441,784},
    xticklabels={$9$,$25$,$49$,$121$,$225$,$441$,$784$},
    legend style={
        at={(0.5,-0.28)},
        anchor=north,
        font=\small,
        legend columns=3,
        column sep=8pt
    },
    grid=major,
    grid style={gray!20},
    tick align=outside,
]

\addplot[red!60, thick, mark=*, mark size=2pt]
coordinates {
    (9,   3.355e-01)
    (25,  1.386e-01)
    (49,  8.738e-02)
    (121, 5.282e-02)
    (225, 4.313e-02)
    (441, 3.865e-02)
    (784, 3.967e-02)
};
\addlegendentry{$\kappa=0.5$}

\addplot[blue!40, thick, mark=triangle*, mark size=2pt]
coordinates {
    (9,   3.355e-01)
    (25,  1.380e-01)
    (49,  8.170e-02)
    (121, 3.691e-02)
    (225, 2.076e-02)
    (441, 1.070e-02)
    (784, 5.876e-03)
};
\addlegendentry{$\kappa=1.0$}

\addplot[blue!70, thick, mark=square*, mark size=2pt]
coordinates {
    (9,   3.438e-01)
    (25,  1.407e-01)
    (49,  8.199e-02)
    (121, 3.684e-02)
    (225, 2.069e-02)
    (441, 1.061e-02)
    (784, 5.789e-03)
};
\addlegendentry{$\kappa=1.5$}

\addplot[blue!95!black, thick, mark=diamond*, mark size=2pt]
coordinates {
    (9,   3.513e-01)
    (25,  1.411e-01)
    (49,  8.206e-02)
    (121, 3.685e-02)
    (225, 2.068e-02)
    (441, 1.060e-02)
    (784, 5.785e-03)
};
\addlegendentry{$\kappa=2.0$}

\addplot[black, thin, dotted, no marks]
coordinates {
    (9,   4.503e-01)
    (25,  1.621e-01)
    (49,  8.270e-02)
    (121, 3.349e-02)
    (225, 1.801e-02)
    (441, 9.189e-03)
    (784, 5.169e-03)
};
\addlegendentry{Ref.\ slope $-1$}

\end{loglogaxis}
\end{tikzpicture}
\caption{Budget allocation experiment with $N = m^\kappa$ and
$n_\mathrm{PCA}=20$.
The curve with $\kappa=0.5$ stagnates, while
all curves with $\kappa \geq 1.0$ converge at the oracle rate
and are essentially indistinguishable from each other.}
\label{fig:BudgetAllocation}
\end{figure}

\paragraph{Regularization sensitivity.}
\begin{figure}[ht]
\centering
\begin{tikzpicture}
\begin{semilogxaxis}[
    width=0.85\textwidth,
    height=0.45\textwidth,
    xlabel={Regularization parameter $\lambda$},
    ylabel={Mean relative $L_2$ error},
    xmin=5e-15, xmax=200,
    xtick={1e-14,1e-12,1e-10,1e-8,1e-6,1e-4,1e-2,1e0,1e2},
    ymin=0, ymax=0.65,
    legend style={
        at={(0.5,-0.28)}, anchor=north,
        legend columns=-1, font=\scriptsize,
        column sep=8pt,
    },
    grid=major,
    grid style={gray!20},
    tick align=outside,
]

\addplot[blue, thick, mark=*, mark size=1.8pt]
coordinates {
    (9.999e-15,  2.057e-02)
    (3.562e-14,  2.057e-02)
    (1.269e-13,  2.057e-02)
    (4.520e-13,  2.057e-02)
    (1.610e-12,  2.057e-02)
    (5.736e-12,  2.057e-02)
    (2.043e-11,  2.057e-02)
    (7.279e-11,  2.057e-02)
    (2.593e-10,  2.057e-02)
    (9.237e-10,  2.057e-02)
    (3.290e-09,  2.057e-02)
    (1.172e-08,  2.057e-02)
    (4.175e-08,  2.057e-02)
    (1.487e-07,  2.057e-02)
    (5.298e-07,  2.058e-02)
    (1.887e-06,  2.057e-02)
    (6.723e-06,  2.058e-02)
    (2.395e-05,  2.059e-02)
    (8.532e-05,  2.064e-02)
    (3.039e-04,  2.080e-02)
    (1.083e-03,  2.134e-02)
    (3.857e-03,  2.298e-02)
    (1.374e-02,  2.711e-02)
    (4.894e-02,  3.560e-02)
    (1.743e-01,  5.091e-02)
    (6.210e-01,  7.645e-02)
    (2.212e+00,  1.165e-01)
    (7.880e+00,  1.838e-01)
    (2.807e+01,  3.232e-01)
    (1.000e+02,  5.772e-01)
};
\addlegendentry{Oracle $A_r$}

\addplot[black, dashed, thick, no marks]
coordinates {(4.4444e-03, 0) (4.4444e-03, 0.65)};
\addlegendentry{$\lambda^* = 4.44\times10^{-3}$ (theory)}

\end{semilogxaxis}
\end{tikzpicture}
\caption{Oracle error as a function of $\lambda$ for fixed $m=225$.
The error is flat over more than ten orders of magnitude to the left
of the theoretical optimum $\lambda^* = 225^{-1} \approx 4.44\times 10^{-3}$,
confirming robustness of the regularization schedule $\lambda(m)=m^{-5/2}$.}
\label{fig:LambdaSensitivity}
\end{figure}

\cref{fig:LambdaSensitivity} shows the oracle error as a function of
$\lambda$ at $m=225$.
The curve is flat over many orders of magnitude around the theoretical
optimum $\lambda^*=225^{-1}\approx 4.44\times 10^{-3}$, with a
sharp increase only for $\lambda\gg\lambda^*$. The schedule used in practice, $\lambda(m)=m^{-5/2}$, gives $ \lambda(225) \approx 1.32 \times 10^{-6} $, which is well within the flat region.

\subsection{Physics-Informed Reconstruction: Poisson Equation}
\label{sec:NumPI}

We consider the Poisson problem
\begin{align*}
  -\Delta v(x) = u(x)\quad\text{on }(0,1)^2,\qquad
  v = 0\text{ on }\partial(0,1)^2.
\end{align*}
The choice of $-\Delta$ as the differential operator is deliberate:
its action on the Matérn kernel has a closed-form expression, so the
representer-theorem system of \cref{sec:PhysicsInformed} can be
assembled exactly without approximating kernel derivatives. Both matrix valued kernels used are again diagonal kernels, where the
the output kernel is chosen to be Matérn-$\nicefrac{9}{2}$ ($\nu=4.5$,
$\sigma=5.5$), because the system matrix requires evaluating
$(-\Delta)^2 K_\cV$, which demands $K_\cV\in C^4$. Matérn-$\nicefrac{9}{2}$
provides $C^8$ smoothness, giving a comfortable margin.
The lengthscale is selected per test sample by leave-one-out
cross-validation on the $Y$-block.
The PDE constraint weight is $\mu=\lambda(m)$, tying the PI
regularization to the data regularization so that neither block
dominates across the full range of $m$.
The bottom kernel is Matérn-$\nicefrac{3}{2}$. We compare three
reconstructors: the plain oracle $A_{\mathrm{on}}$ (kernel interpolant using
exact $S_Y v^*$), the PI oracle $A_{\mathrm{on}}^{\mathrm{PI}}$ (same plus PDE
enforced at $m_{\cL}$ collocation points $Z$), and the PI
surrogate (same as PI oracle but with $S_Y v^*$ replaced by $A_{\mathrm{off}}(u^*)$).
Results are averaged over $500$ test samples.

\paragraph{Convergence in $m$.}

\begin{figure}[ht]
\centering
\begin{tikzpicture}
\begin{loglogaxis}[
    width=0.85\textwidth,
    height=0.45\textwidth,
    xlabel={Output observation points $m$},
    ylabel={Mean relative $L_2$ error},
    xmin=7, xmax=1100,
    ymin=5e-4, ymax=1.2,
    xtick={9,25,49,121,225,441,784},
    xticklabels={$9$,$25$,$49$,$121$,$225$,$441$,$784$},
    legend style={
        at={(0.5,-0.28)},
        anchor=north,
        font=\small,
        legend columns=3,
        column sep=8pt
    },
    grid=major,
    grid style={gray!20},
    tick align=outside,
]

\addplot[blue, thick, mark=*, mark size=2pt]
coordinates {
    (9,   4.734e-01)
    (25,  2.342e-01)
    (49,  1.321e-01)
    (121, 5.241e-02)
    (225, 2.611e-02)
    (441, 1.021e-02)
    (784, 5.149e-03)
};
\addlegendentry{Oracle $A_r$}

\addplot[red, thick, mark=square*, mark size=2pt]
coordinates {
    (9,   3.613e-01)
    (25,  6.374e-02)
    (49,  2.433e-02)
    (121, 7.076e-03)
    (225, 3.217e-03)
    (441, 1.327e-03)
    (784, 1.457e-03)
};
\addlegendentry{PI oracle $A_r^{\mathrm{PI}}$}

\addplot[orange!90!black, thick, dashed, mark=triangle*, mark size=2pt]
coordinates {
    (9,   3.668e-01)
    (25,  8.014e-02)
    (49,  3.765e-02)
    (121, 2.643e-02)
    (225, 2.417e-02)
    (441, 2.320e-02)
    (784, 2.296e-02)
};
\addlegendentry{PI surrogate}

\addplot[black, thin, dotted, no marks]
coordinates {
    (9,   5.829e-01)
    (25,  2.098e-01)
    (49,  1.071e-01)
    (121, 4.336e-02)
    (225, 2.332e-02)
    (441, 1.190e-02)
    (784, 6.691e-03)
};
\addlegendentry{Ref.\ slope $-1$}

\end{loglogaxis}
\end{tikzpicture}
\caption{Oracle, PI oracle, and PI surrogate errors
as a function of output resolution~$m$ ($m_{\cL}=m$,
$\mu=\lambda(m)$, lengthscale by LOO-CV).
The PI oracle is smaller than the plain oracle at every tested $m$;
both saturate for $m>441$ at the $O(h^2)$ FD discretization floor.
The PI surrogate saturates at $\approx 0.023$ from $m=121$,
reflecting the irreducible learning error of $A_{\mathrm{off}}$.}
\label{fig:PIConvergence}
\end{figure}

The PI oracle error is smaller than the plain oracle error at every
tested value of $m$, demonstrating that enforcing the PDE at
collocation points reduces the reconstruction error.
Both the PI oracle and plain oracle saturate for $m>441$ at an error
of approximately $1.3\times 10^{-3}$, which coincides with the
$O(h^2)$ discretization error of the $64\times 64$ FD solver; this is
a solver precision ceiling rather than a limitation of the
reconstruction method.
The PI surrogate tracks the PI oracle for small $m$ but saturates at
$\approx 0.023$ from $m=121$ onward.
This saturation reflects the irreducible learning error of $A_{\mathrm{off}}$ in
$n_{\mathrm{PCA}}=28$ input dimensions: as the PI reconstruction
becomes more accurate, it faithfully propagates the fixed prediction
bias of $A_{\mathrm{off}}$ into the output, consistent with the budget allocation
analysis of \cref{rem:ConvergenceRate}.

\paragraph{Sensitivity to $m_{\cL}$.}
\begin{figure}[ht]
\centering
\begin{tikzpicture}
\begin{semilogxaxis}[
    width=0.85\textwidth,
    height=0.45\textwidth,
    xlabel={Collocation points $m_{\cL}$},
    ylabel={Mean relative $L_2$ error},
    xmin=6, xmax=350,
    ymin=0, ymax=0.155,
    xtick={9,25,49,121,225},
    xticklabels={$9$,$25$,$49$,$121$,$225$},
    legend style={
        at={(0.5,-0.28)}, anchor=north,
        legend columns=-1, font=\scriptsize,
        column sep=8pt,
    },
    grid=major,
    grid style={gray!20},
    tick align=outside,
]

\addplot[steelblue, thin, dotted, no marks, domain=6:350]
    {0.1321};
\addlegendentry{Plain oracle ($0.132$)}

\addplot[red, thick, mark=square*, mark size=2pt]
coordinates {
    (9,   1.106e-01)
    (25,  7.238e-02)
    (49,  2.433e-02)
    (121, 1.963e-02)
    (225, 2.119e-02)
};
\addlegendentry{PI oracle $A_r^{\mathrm{PI}}$}

\addplot[orange!90!black, thick, dashed, mark=triangle*, mark size=2pt]
coordinates {
    (9,   1.158e-01)
    (25,  7.832e-02)
    (49,  3.764e-02)
    (121, 3.365e-02)
    (225, 3.420e-02)
};
\addlegendentry{PI surrogate}

\addplot[gray, thin, dashed, no marks]
    coordinates {(49, 0) (49, 0.155)};
\node[gray, font=\small] at (axis cs:49,0.148) [anchor=west] {$m_{\cL}=m$};

\end{semilogxaxis}
\end{tikzpicture}
\caption{Effect of the number of collocation points $m_{\cL}$
at fixed output resolution $m=49$ ($\mu=\lambda(m)$, lengthscale by
LOO-CV).
The PI oracle error decreases up to $m_{\cL}=m=49$ and then
stagnates; adding further collocation points yields no benefit.
The choice $m_{\cL}=m$ is therefore both sufficient and
essentially optimal.}
\label{fig:PInCol}
\end{figure}

\cref{fig:PInCol} shows the PI oracle error at $m=49$ as a function
of the number of collocation points.
The error decreases up to $m_{\cL}=m=49$ and then stagnates.
Adding collocation points beyond the output resolution gives no
further benefit.
This confirms that $m_{\cL}=m$ is a sufficient and
essentially optimal choice.

\section{Conclusion}\label{sec:Conclusion}

We studied kernel-based operator learning in the two-stage
sampling framework of \citet{Batlle2024} and \citet{Sharma2026} and derived an explicit budget allocation rule relating the number $N$ of training pairs, the number of input observations $n$, and the output resolution $m$. The rule quantifies how offline learning effort and online discretization must be balanced in order to achieve optimal approximation performance. It is obtained from a coupled error analysis based on interpreting the online reconstruction as recovery from perturbed data, which yields a decomposition into learning and reconstruction errors that can be analyzed independently. As a consequence, we recover the oracle convergence rate
$m^{-(\sigma-\tau)/d}$ whenever $N$ scales appropriately with $m$ and $n$. The theoretical
predictions are validated on the Darcy flow benchmark.

As a second contribution, we introduced a physics-informed extension of the online reconstruction stage. By augmenting the reconstruction functional with a soft PDE collocation penalty, the governing equation can be enforced at evaluation time for each new test input without retraining and without additional PDE solves. The resulting physics-informed Tikhonov functional admits a closed-form representer theorem. Numerical experiments for the Poisson equation demonstrate consistent reductions in reconstruction error and indicate that $m_{\cL}=m$ collocation points provide an effective accuracy-cost tradeoff.

Several theoretical questions remain open. Most notably, a convergence-rate analysis for the physics-informed surrogate would provide a rigorous characterization of the interplay between the training budget $N$, the number of input observations $n$, the reconstruction resolution $m$, and the number of collocation points $m_{\cL}$. Such a result could lead to a physics-informed budget allocation rule and extend the present analysis beyond the purely data-driven setting

\end{document}